\theoremstyle{plain}
\newtheorem{theorem}{Theorem}
\newtheorem*{theorem*}{Theorem}
\newtheorem{prop}[theorem]{Proposition}
\newtheorem{cor}[theorem]{Corollary}
\newtheorem{lemma}[theorem]{Lemma}
\newtheorem*{lemma*}{Lemma}
\newtheorem*{fact*}{Fact}
\theoremstyle{definition}
\newtheorem{definition}[theorem]{Definition}
\theoremstyle{remark}
\newtheorem{example}[theorem]{Example}
\newtheorem{rmk}[theorem]{Remark}
\DeclareRobustCommand{\gobblefive}[5]{}							
\newcommand*{\SkipTocEntry}{\addtocontents{toc}{\gobblefive}}
\DeclareMathAlphabet{\mathbbold}{U}{bbold}{m}{n}	
\DeclareMathOperator{\Tr}{\mathrm{Tr}}
\newcommand{\ball}{B}
\newcommand{\R}{\mathbb{R}}
\newcommand{\distr}{\mathcal{D}}
\title[Sharp measure contraction property for generalized H-type groups]{Sharp measure contraction property for generalized H-type Carnot groups}
\author[Davide Barilari]{Davide Barilari$^\flat$}
\address{$^\flat$Institut de Math\'ematiques de Jussieu-Paris Rive Gauche UMR CNRS 7586, Universit\'e Paris-Diderot,
Batiment Sophie Germain, Case 7012, 75205 Paris Cedex 13, France}
\email{\href{mailto:davide.barilari@imj-prg.fr}{davide.barilari@imj-prg.fr}}
\author[Luca Rizzi]{Luca Rizzi$^\sharp$}
\address{$^\sharp$Univ. Grenoble Alpes, CNRS, Institut Fourier, F-38000 Grenoble, France}
\email{\href{mailto:luca.rizzi@univ-grenoble-alpes.fr}{luca.rizzi@univ-grenoble-alpes.fr}}
\subjclass[2010]{53C17, 53C22, 35R03, 54E35, 53C21}
\keywords{sub-Riemannian geometry, Carnot groups, measure contraction property, geodesic dimension, optimal transport}
\date{\today}
\begin{document}

\begin{abstract}
We prove that H-type Carnot groups of rank $k$ and dimension $n$ satisfy the $\mathrm{MCP}(K,N)$ if and only if $K\leq 0$ and $N \geq k+3(n-k)$. The latter integer coincides with the geodesic dimension of the Carnot group. The same result holds true for the larger class of \emph{generalized H-type} Carnot groups introduced in this paper, and for which we compute explicitly the optimal synthesis. This extends the results of \cite{MCPc1} and constitutes the largest class of Carnot groups for which the curvature exponent coincides with the geodesic dimension. We stress that generalized H-type Carnot groups have step 2, include all corank 1 groups and, in general, admit abnormal minimizing curves.

As a corollary, we prove the absolute continuity of the Wasserstein geodesics for the quadratic cost on all generalized H-type Carnot groups.
\end{abstract}

\maketitle
\setcounter{tocdepth}{1} 
\tableofcontents

\section{Introduction}

The measure contraction property (MCP) is one of the generalizations of Ricci curvature bounds to non-smooth metric measure spaces $(X,d,\mu)$.
The condition $\mathrm{MCP}(K,N)$ we discuss here was introduced by Ohta \cite{Ohta-MCP} and controls how the measure $\mu$ of a set in $X$ is distorted along geodesics defined by the distance $d$, where $(X,d)$ is assumed to be a length space (see also Sturm \cite{S-II}, for a slightly stronger version of  the same condition).
 
When $X$ is a complete Riemannian manifold of dimension $N\geq 2$, $d$ is the Riemannian distance, and $\mu$ is the Riemannian measure, the condition $\mathrm{MCP}(K,N)$ is equivalent to require that $X$ has Ricci curvature bounded from below by $K$ (see \cite[Thm. 3.2]{Ohta-MCP}). The measure contraction property is thus a synthetic replacement for Ricci curvature bounds on metric spaces. We refer to Section \ref{s:mms} for a precise definition. We recall that the $\mathrm{MCP}$ is the weakest among the synthetic curvature conditions introduced in \cite{S-I,S-II,LV-Ricci} and developed in the subsequent literature. We stress that all these conditions are stable under pointed Gromov-Hausdorff limits.
 
\medskip
In this paper, we continue the investigation of measure contraction properties in sub-Riemannian geometry, started in \cite{Juillet,Rifford,MCPc1}.  More precisely we focus our attention on a class of Carnot groups that we call \emph{generalized H-type groups}, which includes the classical H-type Carnot groups, in the sense of Kaplan~\cite{Kaplan,BLU-Stratified}. Here a Carnot group $G$ is considered as a metric measure space $(G,d,\mu)$ equipped with the Carnot-Carath\'eodory distance $d$ and a left-invariant measure $\mu$ (thus a multiple of Haar and Popp measures, see \cite{montgomerybook,BR-popp}).

Even though Carnot Groups (and more in general sub-Riemannian manifolds) can be seen as Gromov-Hausdorff limits of sequences of Riemannian ones with the same dimension, these sequences have Ricci curvature unbounded from below (for an explicit example, see \cite{Rifford}). Thus, it is not clear whether these structures satisfy some $\mathrm{MCP}$. 
   
The simplest Carnot groups, that is the $2d+1$ dimensional Heisenberg groups $\mathbb{H}_{2d+1}$, do not satisfy the $\mathrm{MCP}(K,2d+1)$ for any $K$. Indeed, in \cite{Juillet}, Juillet proved that $\mathbb{H}_{2d+1}$ satisfies 
$\mathrm{MCP}(K,N)$ if and only if $K \leq 0$ and $N \geq 2d+3$. The number $2d+3$ is then the optimal dimension for the synthetic condition $\mathrm{MCP}(0,N)$, and is larger than both the topological ($2d+1$) and the Hausdorff ($2d+2$) dimensions of $\mathbb{H}_{2d+1}$.

Juillet's results have been extended in \cite{MCPc1}, where it is proved that any corank $1$ Carnot group $(G,d,\mu)$ of rank $k$ satisfies the $\mathrm{MCP}(K,N)$ if and only if $K \leq 0$ and $N \geq k+3$. In particular, they satisfy the $\mathrm{MCP}(0,k+3)$.

For what concerns more general structures, in \cite{Rifford}, Rifford proved that if a Carnot group of rank $k$ and dimension $n$ is ideal (i.e.\ does not admit abnormal minimizing curves), then it satisfies $\mathrm{MCP}(0,N)$ for some finite $N$ satisfying the inequality $N\geq Q+n-k$, where $Q$ is the Hausdorff dimension of the Carnot group. For a non-ideal Carnot group, if one \emph{assumes} that $\mathrm{MCP}(0,N)$ is satisfied for some $N$, the inequality $N\geq  Q+n-k$ still holds.

\subsection{Geodesic dimension}
Since $\mathrm{MCP}(K,N)$ implies $\mathrm{MCP}(K,N')$ for all $N'\geq N$, the above results raised the question of which is the optimal $N$ such that the $\mathrm{MCP}(K,N)$ is satisfied on a general metric measure space. A bound is given by the concept of geodesic dimension, introduced in \cite{ABR-Curvature} for sub-Riemannian structures and in \cite{MCPc1} for metric measure spaces, and which we briefly overview here (see Section \ref{s:mms} for details). Let $(X,d,\mu)$ be a metric measure space, which for simplicity we assume having negligible cut loci. Let $\Omega \subset X$ be a bounded, measurable set, with $0<\mu(\Omega)<+\infty$, and consider its homothety $\Omega_t$ with center $x$ and ratio $t \in [0,1]$, such that $\Omega_{1}=\Omega$ and $\Omega_{0}=\{x\}$. The \emph{geodesic dimension} $\mathcal{N}(x)$ at the point $x \in X$ is, roughly speaking, the largest integer such that $\mu(\Omega_t) = O(t^{\mathcal{N}(x)})$ when $t \to 0$, for all such $\Omega$.

\begin{prop}[\cite{MCPc1}]\label{t:bound}
Let $(X,d,\mu)$ be a metric measure space with negligible cut loci that satisfies the $\mathrm{MCP}(K,N)$, for some $K \in \R$ and $N >1$ or $K\leq 0$ and $N=1$. Then
\begin{equation}
N \geq \sup \{\mathcal{N}(x) \mid x \in X\}.
\end{equation}
where $\mathcal{N}(x)$ is the geodesic dimension of $X$ at $x$.
\end{prop}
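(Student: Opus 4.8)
The plan is to unwind the $\mathrm{MCP}(K,N)$ inequality, applied with base point $x$ and an arbitrary bounded set $\Omega$ of finite positive measure, into a lower bound of the form $\mu(\Omega_t)\ge C_\Omega\, t^{N}$ valid for small $t$ and with $C_\Omega>0$; comparing this with the very definition of the geodesic dimension $\mathcal N(x)$ then forces $\mathcal N(x)\le N$, and taking the supremum over $x\in X$ yields the claim.

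Concretely, recall (see Section \ref{s:mms}) that $\mathrm{MCP}(K,N)$ with $N>1$ requires, for every $x\in X$ and every measurable $\Omega$ with $0<\mu(\Omega)<+\infty$ (and $\Omega$ inside the ball of radius $\pi\sqrt{(N-1)/K}$ centered at $x$ when $K>0$),
\[
\mu\big(M_t(x,\Omega)\big)\;\ge\;\int_{\Omega}\, t\left(\frac{\mathsf s_{K/(N-1)}\!\big(t\,d(x,z)\big)}{\mathsf s_{K/(N-1)}\!\big(d(x,z)\big)}\right)^{N-1}\!\!d\mu(z),\qquad t\in[0,1],
\]
where $M_t(x,\Omega)$ is the set of $t$-intermediate points of minimizing geodesics from $x$ to points of $\Omega$ and $\mathsf s_\kappa$ is the usual comparison (``generalized sine'') function. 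Since $X$ has negligible cut loci, for $\mu$-a.e.\ $z$ the minimizing geodesic from $x$ to $z$ is unique; hence the homothety $\Omega_t$ of center $x$ and ratio $t$ appearing in the definition of $\mathcal N(x)$ is well defined up to a $\mu$-null set and agrees $\mu$-a.e.\ with $M_t(x,\Omega)$, so the displayed inequality is a lower bound for $\mu(\Omega_t)$.

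The only genuine computation is a uniform estimate on the comparison ratio. Fix $\Omega$ bounded, say $d(x,z)\le R$ for all $z\in\Omega$ (with $R<\pi\sqrt{(N-1)/K}$ when $K>0$), and set $\kappa=K/(N-1)$. Using $\mathsf s_\kappa(r)\ge r$ for $\kappa\le 0$, resp.\ $\sin y\ge \tfrac{2}{\pi}y$ on $[0,\tfrac{\pi}{2}]$ for $\kappa>0$, together with the monotonicity of $r\mapsto \mathsf s_\kappa(r)/r$, one obtains a constant $c_0>0$ and a threshold $t_0\in(0,1]$, depending only on $\kappa$ and $R$, such that $\mathsf s_\kappa(t\,d(x,z))/\mathsf s_\kappa(d(x,z))\ge c_0\,t$ for all $z\in\Omega$ and all $t\in(0,t_0]$. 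Plugging this into the $\mathrm{MCP}$ inequality gives, for $N>1$,
\[
\mu(\Omega_t)\;\ge\;\int_\Omega t\,(c_0t)^{N-1}\,d\mu(z)\;=\;c_0^{\,N-1}\,\mu(\Omega)\,t^{N}\;=:\;C_\Omega\,t^{N},\qquad t\in(0,t_0],
\]
with $C_\Omega>0$ because $\mu(\Omega)>0$; the case $N=1$ is immediate, since $\mathrm{MCP}(K,1)$ with $K\le0$ reads directly $\mu(\Omega_t)\ge t\,\mu(\Omega)$.

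It remains to conclude. By definition $\mathcal N(x)$ is the largest integer such that $\mu(\Omega_t)=O\big(t^{\mathcal N(x)}\big)$ as $t\to0^+$ for every admissible $\Omega$; in particular, for the $\Omega$ above there is $C>0$ with $\mu(\Omega_t)\le C\,t^{\mathcal N(x)}$ near $0$. Combined with the lower bound, this forces $t^{\mathcal N(x)-N}\ge C_\Omega/C>0$ for all small $t>0$, which is possible only if $\mathcal N(x)\le N$. As $x$ was arbitrary, $N\ge\sup_{x\in X}\mathcal N(x)$. I expect the only delicate point to be the identification, up to $\mu$-null sets, of the abstract $\mathrm{MCP}$ midpoint set $M_t(x,\Omega)$ with the homothety $\Omega_t$ used to define $\mathcal N(x)$---this is exactly where the negligibility of cut loci (hence essential uniqueness of minimizers issued from $x$) enters---and, to a lesser extent, in checking that the estimate on $\mathsf s_\kappa$ is uniform over the bounded set $\Omega$ and compatible with the ball restriction when $K>0$; everything else is a direct estimate.
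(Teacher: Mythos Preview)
The paper does not prove Proposition~\ref{t:bound}; it is quoted from \cite{MCPc1} without proof. So there is no in-paper argument to compare your proposal against. Your overall strategy---extract from $\mathrm{MCP}(K,N)$ a lower bound $\mu(\Omega_t)\ge c\,t^N$ for bounded $\Omega$ and confront it with the definition of $\mathcal N(x)$---is the natural one and is essentially how the result is proved in \cite{MCPc1}. Two remarks are in order.

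First, in this paper the $\mathrm{MCP}$ is already stated (Definition~\ref{d:MCP}) for spaces with negligible cut loci using the homothety $\Omega_t$ itself, so the identification of your ``$M_t(x,\Omega)$'' with $\Omega_t$ is built into the setting and requires no extra argument.

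Second, your concluding step does not match the precise Definition~\ref{d:gd}. You write that ``by definition $\mathcal N(x)$ is the largest integer such that $\mu(\Omega_t)=O(t^{\mathcal N(x)})$'' and then invoke an upper bound $\mu(\Omega_t)\le C\,t^{\mathcal N(x)}$. Neither claim follows from Definition~\ref{d:gd}: $\mathcal N(x)$ need not be an integer, and the definition only guarantees that $C_s(x)=0$ (equivalently $\mu(\Omega_t)=o(t^s)$ for every admissible $\Omega$) whenever $s<\mathcal N(x)$; it does \emph{not} give $\mu(\Omega_t)=O(t^{\mathcal N(x)})$. The clean way to finish is to observe that your lower bound yields, for every bounded admissible $\Omega$,
\[
\limsup_{t\to 0^+}\frac{\mu(\Omega_t)}{t^N\,\mu(\Omega)}\;\ge\;c_0^{\,N-1}\;>\;0,
\]
hence $C_N(x)>0$. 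By Definition~\ref{d:gd}, $\mathcal N(x)=\sup\{s>0:C_s(x)=0\}$, and since $s\mapsto C_s(x)$ is nondecreasing this forces $N\ge \mathcal N(x)$. Taking the supremum over $x$ gives the statement. With this correction the argument is complete.
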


\subsection{Carnot groups}
In the case of left-invariant structures on Carnot (or, more generally, Lie) groups, the number $\mathcal{N}(x)$ is independent on $x\in X$, and we denote it by $\mathcal{N}$. 
Furthermore, notice that spaces verifying $\mathrm{MCP}(K,N)$ with $K>0$ are compact, so that Carnot groups can satisfy the $\mathrm{MCP}(K,N)$ only if $K \leq 0$. On the other hand, thanks to the homogeneity properties of Carnot groups, if $\mathrm{MCP}(K,N)$ is satisfied for some $K\leq 0$, then $\mathrm{MCP}(\varepsilon K,N)$ is also satisfied, for all $\varepsilon \geq 0$. For this reason the $\mathrm{MCP}(0,N)$ plays a special role for Carnot groups, which motivated the definition of \emph{curvature exponent}, as
\[
N_0:= \inf\{N>1 \mid \mathrm{MCP}(0,N) \text{ is satisfied}\}.
\]
Indeed, by Proposition~\ref{t:bound}, $N_0 \geq \sup\{ \mathcal{N}(x) \mid x \in X\}$. The following example shows that, in general, the curvature exponent $N_{0}$ can be strictly bigger than the geodesic dimension.
\begin{example}[Riemannian Heisenberg]\label{ex:ex1}
Consider the left-invariant Riemannian structure on $\R^{3}$ generated by the following global orthonormal vector fields:
\begin{equation}
X = \partial_x -\frac{y}{2}\partial_z, \qquad Y = \partial_y + \frac{x}{2}\partial_z, \qquad Z = \partial_z.
\end{equation}
As for any Riemannian structure, $k=n=\mathcal{N} = 3$. In \cite{Rifford} it is proved that, when equipped with the Riemannian volume, this structure satisfies the $\mathrm{MCP}(0,5)$. The same techniques proves that the $\mathrm{MCP}(0,5-\varepsilon)$ is violated for any $\varepsilon>0$, so its curvature exponent is $N_0 = 5$. We stress that this is a left-invariant structure, but is not a Carnot group.
\end{example}

We expect, however, that for Carnot groups the geodesic dimension coincides with the curvature exponent (cf.\ the conjecture formulated in \cite{MCPc1}). Our main result is a proof of this fact for generalized H-type groups (see Section~\ref{s:prel}), which generalizes the results of  \cite{Juillet,MCPc1}, and constitutes then the largest class of structures for which the aforementioned conjecture is verified.

\begin{theorem}\label{t:htype}
Let $(G,d,\mu)$ be a generalized H-type Carnot group of rank $k$ and dimension $n$. Then it satisfies the $\mathrm{MCP}(K,N)$ if and only if $K \leq 0$ and $N \geq k+3(n-k)$, the latter integer being the geodesic dimension of the Carnot group.
\end{theorem}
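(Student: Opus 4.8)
The statement splits into an ``if'' and an ``only if'' direction, and the plan is to reduce the ``if'' direction to a single sharp Jacobian inequality. For the ``only if'' direction, $K\le 0$ is forced because $\mathrm{MCP}(K,N)$ with $K>0$ makes $(X,d)$ bounded (a Bonnet--Myers-type conclusion), whereas every Carnot group has infinite diameter; and the inequality $N\ge k+3(n-k)$ will follow from Proposition~\ref{t:bound} once we show that the geodesic dimension of a generalized H-type group is exactly $\mathcal{N}=k+3(n-k)$. For the ``if'' direction, the monotonicity $\mathrm{MCP}(K,N)\Rightarrow\mathrm{MCP}(K,N')$ for $N'\ge N$ together with the homogeneity of Carnot groups recalled above reduce everything to the single statement that every generalized H-type Carnot group satisfies $\mathrm{MCP}(0,N_0)$ with $N_0:=k+3(n-k)$.

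\textbf{Optimal synthesis.} The first step is to make the geodesics explicit. By left-invariance we place the base point at the identity $e$. Writing $\mathfrak{g}=\mathfrak{h}\oplus\mathfrak{z}$ with $\dim\mathfrak{h}=k$ and $\dim\mathfrak{z}=n-k$, and letting $J\colon\mathfrak{z}\to\mathfrak{so}(\mathfrak{h})$ be the bundle of skew-symmetric maps encoding the Lie bracket, the generalized H-type hypothesis is exactly the condition under which the Hamiltonian system of normal extremals integrates in closed form: for an initial covector $\lambda=(p,\tau)$ the horizontal velocity obeys $\dot{x}=e^{tJ_\tau}p$, which one integrates using that $J_\tau$ acts as a conformal rotation of angular speed $|\tau|$ on $(\ker J_\tau)^\perp$ and trivially on $\ker J_\tau$, after which the vertical component follows by a further quadrature. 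This yields an explicit formula for the exponential map $\mathcal{E}(\lambda)=\gamma_\lambda(1)$, and the quadratic homogeneity of the Hamiltonian gives $\gamma_\lambda(t)=\mathcal{E}(t\lambda)$. One then determines the first conjugate time along $\gamma_\lambda$ (governed by $|\tau|$, occurring at $|\tau|t=2\pi$), describes $\mathrm{Cut}(e)$ and shows it is closed and $\mu$-negligible --- it lies in the lower-dimensional locus where $d\mathcal{E}$ drops rank --- and, crucially since the group may carry abnormal minimizers, checks that the set of points joined to $e$ by a non-unique or non-normal minimizer is still $\mu$-negligible. Hence a.e.\ point of $G$ is reached by a unique normal minimizer depending smoothly on the endpoint.

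\textbf{The sharp Jacobian inequality.} On the full-measure open set $G\setminus\mathrm{Cut}(e)$ the homothety toward $e$ is $\phi_t=\mathcal{E}\circ(t\,\cdot)\circ\mathcal{E}^{-1}$, so its Jacobian with respect to $\mu$ is
\[
\mathcal{J}_t(\mathcal{E}(\lambda))=t^{n}\,\frac{\mathcal{J}_{\mathcal{E}}(t\lambda)}{\mathcal{J}_{\mathcal{E}}(\lambda)},
\]
where $\mathcal{J}_{\mathcal{E}}$ is the Jacobian of $\mathcal{E}$ taken with respect to a fixed Lebesgue measure on $\mathfrak{g}^*$ and $\mu$ on $G$. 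A direct computation from the explicit formula factors $\mathcal{J}_{\mathcal{E}}(p,\tau)$ into a contribution equal to $1$ from the $\ker J_\tau$ directions, a $\sin$-type trigonometric contribution in the variable $|\tau|$ with multiplicity $\rank J_\tau$ from the $(\ker J_\tau)^\perp$ directions, and a contribution from the $\tau$-derivatives; in particular $\mathcal{J}_{\mathcal{E}}$ is independent of the $\ker J_\tau$-component of $p$. By the characterization of $\mathrm{MCP}(0,N_0)$ through such Jacobians (Section~\ref{s:mms}) it remains to prove $\mathcal{J}_t(q)\ge t^{N_0}$ for all $t\in[0,1]$ and a.e.\ $q$; after the substitution above, and using $N_0-n=2(n-k)$, this amounts to showing that $s\mapsto s^{-2(n-k)}\mathcal{J}_{\mathcal{E}}(s\lambda)$ does not increase on the minimizing range $s\in(0,1]$, which reduces to a one-variable trigonometric inequality in $x=|\tau|\in[0,2\pi)$ of exactly the type appearing in Juillet's treatment of the Heisenberg group and in \cite{MCPc1}. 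I expect this last inequality, with the sharp exponent $N_0=n+2(n-k)$ --- each of the $n-k$ vertical directions forcing a cubic vanishing of the Jacobian at $t=0$ and each of the $k$ horizontal directions only a linear one --- to be the main obstacle: it must hold with the exact constants, uniformly up to the boundary $|\tau|\to 2\pi$ and across the stratification of $p$ into its $\ker J_\tau$ and $(\ker J_\tau)^\perp$ parts, and its proof is a delicate calculus argument (monotonicity or convexity of an auxiliary one-variable function).

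\textbf{Sharpness and conclusion.} Finally, the small-time behaviour $\mathcal{J}_{\mathcal{E}}(t\lambda)=c(\lambda)\,t^{2(n-k)}\bigl(1+o(1)\bigr)$ as $t\to 0^{+}$, with $c(\lambda)>0$ for generic $\lambda$, read off from the explicit formula, gives $\mu(\Omega_t)/\mu(\Omega)\asymp t^{N_0}$ for the homothety of a small neighbourhood of a generic point; hence the geodesic dimension is exactly $\mathcal{N}=N_0=k+3(n-k)$. Combined with Proposition~\ref{t:bound} and the non-compactness remark, this proves the ``only if'' direction and completes the proof. The same asymptotics equivalently exhibit, for every $\varepsilon>0$, an explicit set on which $\mathrm{MCP}(0,N_0-\varepsilon)$ fails, giving the optimality directly.
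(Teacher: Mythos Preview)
Your overall architecture matches the paper's exactly: explicit exponential map, negligible cut locus, Jacobian inequality $J(t\lambda)/J(\lambda)\ge t^{2(n-k)}$ proved via one-variable trigonometric lemmas, then sharpness from the small-$t$ asymptotics (the paper does Step~2 directly rather than via Proposition~\ref{t:bound}, but your route through the geodesic dimension is equally valid and arguably cleaner for the case $K<0$).

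There is however one concrete slip that would cause your computation to fail in the \emph{generalized} case. You describe $J_\tau$ as acting by ``a conformal rotation of angular speed $|\tau|$ on $(\ker J_\tau)^\perp$'' and put the first conjugate time at $|\tau|t=2\pi$; this is the classical H-type picture $S=\mathrm{Id}$. In the generalized case $S$ has eigenvalues $0<\alpha_1\le\cdots\le\alpha_d$, so $J_\tau$ has singular values $\alpha_i|\tau|$ and the cut time is $2\pi/(\alpha_d|\tau|)$. Consequently the Jacobian is not a single function of $x=|\tau|$ but a product/sum over the eigenspaces (Lemma~\ref{l:jacobian}), with two qualitatively different factors: terms of the type $\sin(y/2)(\sin(y/2)-\tfrac{y}{2}\cos(y/2))$ and, because the corank exceeds $1$, an extra factor $(y-\sin y)^{n-k-1}$ coming from the polar $|z|$-coordinate. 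The sharp inequality therefore does \emph{not} reduce to a single one-variable lemma; the paper needs two (Lemmas~\ref{l:ineq} and~\ref{l:ineq2}), the second one, $f(ty)\ge t^3 f(y)$ for $f(y)=y-\sin y$ on $(0,2\pi)$, being precisely the new ingredient beyond corank $1$. Once you incorporate the $\alpha_i$'s into your formulas, your outline goes through verbatim.
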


We observe that generalized H-type groups may have non-trivial abnormal minimizing curves (albeit not strictly abnormal ones). Equivalently, not all generalized H-type groups are ideal, or fat (see Definitions \ref{d:ideal} and \ref{d:fat}).

As a byproduct of the proof, we describe explicitly normal and abnormal geodesics of generalized H-type groups (Lemmas~\ref{l:geods}, \ref{l:abnormals}), the injectivity domain of the exponential map (Lemma \ref{l:ingjd}), and an explicit formula for its Jacobian determinant (Lemma~\ref{l:jacobian}). In particular, this yields the optimal synthesis of generalized H-type groups.

We mention that in \cite{BR-grande1} the Jacobian estimates of this paper have been used to prove a sharp geodesic Brunn-Minkowski inequality for generalized H-type groups, which as a particular case recovers the measure contraction properties of Theorem~\ref{t:htype}.

\subsection{Consequences for optimal transport}

 Theorem~\ref{t:htype} has consequences on the optimal transportation problem on generalized H-type groups, which we briefly outline here. 

The well posedness of the Monge problem with cost given by the squared sub-Rieman\-nian distance for classical H-type groups was proved in \cite{Rigot2}, generalizing the previous results for the Heisenberg group obtained in \cite{Rigot1}. The same result for general sub-Riemannian structures whose squared distance is Lipschitz in charts on $X\times X$ was obtained in \cite{AL-transport}. The Lipschitzness assumption on the diagonal was finally removed in \cite{FR}. We refer to \cite{rifford2014sub} for background on the transportation problem in sub-Riemannian geometry.

For all step $2$ sub-Riemannian structures on a smooth manifold $X$ (such as generalized H-type Carnot groups), $d^2$ is locally Lipschitz on $X\times X$. Hence, according e.g.\ to \cite[Theorem 3.2]{FR}, for any pair of compactly supported probability measures $\mu_0,\mu_1 \in \mathcal{P}_{\text{comp}}(X)$, with $\mu_0$ absolutely continuous with respect to a reference measure $\mu$, there exists a unique optimal transport map for the Monge problem with quadratic cost. 

In this setting, an interesting problem is to determine whether the Wasserstein geodesic $\mu_t \in \mathcal{P}(X)$, interpolating between $\mu_0$ and $\mu_1$, is absolutely continuous w.r.t.\ $\mu$. An affirmative answer for the $2d+1$ dimensional Heisenberg group was the main result of \cite{FJ}. More general structures have been discussed in \cite[Theorem 3.5]{FR}. There it is proved that, if the sub-Riemannian squared distance is also locally semiconcave outside of the diagonal, then $\mu_t \ll \mu$ for all $t \in [0,1)$. Unfortunately, in presence of non-trivial abnormal minimizers, $d^2$ is not necessarily locally semiconcave outside of the diagonal. Absolute continuity of $\mu_t$ must then be proved directly (see, e.g.\ \cite{BKS17} for the case of corank 1 Carnot groups).

As a consequence of Theorem~\ref{t:htype}, and the fact that step $2$ Carnot groups do not admit branching minimizing geodesics, we can apply \cite[Theorem 1.1]{CM}\footnote{See also the appendix of \cite{CM}, where it is discussed how, under the non-branching assumption, Ohta's $\mathrm{MCP}$, which we use here, is equivalent to Sturm's one, which is the one primarily used in \cite{CM}.}. In particular, we extend the absolute continuity result of \cite{FJ} to a setting with non-trivial abnormal minimizers and higher corank. We refer to \cite{CM} for definitions.
\begin{cor}[\cite{CM}]
Let $(X,d,\mu)$ be a generalized H-type Carnot group. Let $\mu_0,\mu_1 \in \mathcal{P}_2(X)$ with $\mu_0 \ll \mu$. Then there exists a unique $\nu \in \mathrm{OptGeo}(\mu_0,\mu_1)$; it is induced by a map $G: X \to \mathrm{Geo}(X)$, and the unique Wasserstein geodesic  between $\mu_0$ and $\mu_1$ satisfies $(e_t)_\sharp \nu =\rho_t \mu \ll \mu$ for all $t \in [0,1)$. Finally, if $\mu_0,\mu_1$ have bounded support and $\rho_0$ is $\mu$-essentially bounded, then
\begin{equation}
\|\rho_t\|_{L^\infty(X,\mu)} \leq \frac{1}{(1-t)^{k+3(n-k)}} \|\rho_0\|_{L^{\infty}(X,\mu)}, \qquad \forall t \in [0,1).
\end{equation}
\end{cor}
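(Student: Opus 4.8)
The plan is to obtain the Corollary as an immediate application of the abstract optimal transport result \cite[Theorem 1.1]{CM}, whose conclusions are precisely those listed in the statement; the only work is to verify that its hypotheses are met by a generalized H-type Carnot group $(X,d,\mu)$, namely that $(X,d,\mu)$ is an essentially non-branching metric measure space satisfying a measure contraction property.

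First I would record the ambient structure: $(X,d)$ is a complete, separable, locally compact, geodesic metric space, while the reference measure $\mu$, being left-invariant, is a positive constant multiple of the Haar (equivalently, Popp) measure, hence Radon, finite on bounded sets and positive on open sets; moreover rescaling $\mu$ by a positive constant affects neither the densities $\rho_t$ nor their $L^\infty$-norms, so the statement is independent of the normalization. Next, since $(X,d)$ is the Carnot-Carath\'eodory space of a Carnot group of step $2$, it admits no branching minimizing geodesics --- two length-minimizing curves issuing from a common point and agreeing on an initial segment must coincide --- so in particular $(X,d,\mu)$ is essentially non-branching (this non-branching property is recalled in Section~\ref{s:prel}; it can also be read off the explicit description of normal and abnormal minimizers in Lemmas~\ref{l:geods} and \ref{l:abnormals}). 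Finally, Theorem~\ref{t:htype} applied with $K=0$ gives that $(X,d,\mu)$ satisfies Ohta's $\mathrm{MCP}(0,N)$ with $N=k+3(n-k)$, which by the non-branching property coincides with Sturm's $\mathrm{MCP}(0,N)$, the version used in \cite{CM} (see the footnote above and the appendix of \cite{CM}).

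With these facts in hand, \cite[Theorem 1.1]{CM} applies and yields all the assertions at once: for $\mu_0 \ll \mu$ and $\mu_1 \in \mathcal{P}_2(X)$, the set $\mathrm{OptGeo}(\mu_0,\mu_1)$ is a singleton $\{\nu\}$, $\nu$ is induced by a Borel map $G\colon X\to \mathrm{Geo}(X)$, and $(e_t)_\sharp\nu = \rho_t\mu \ll \mu$ for every $t\in[0,1)$; and, under the additional hypotheses that $\mu_0,\mu_1$ have bounded support and $\rho_0 \in L^\infty(X,\mu)$, the same theorem provides the quantitative density estimate, whose general form --- governed by the $\mathrm{MCP}$ distortion coefficients --- reduces, upon specializing to $K=0$, to the polynomial bound $\|\rho_t\|_{L^\infty(X,\mu)} \le (1-t)^{-N}\|\rho_0\|_{L^\infty(X,\mu)}$ with $N=k+3(n-k)$; this last estimate is the familiar consequence of $\mathrm{MCP}(0,N)$, cf.\ \cite{Juillet,Rifford}. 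Since every step is a citation, there is no substantive obstacle here: the only care needed is the bookkeeping of reconciling Ohta's and Sturm's formulations of $\mathrm{MCP}$ and of checking that both the $\mathcal{P}_2$ setting of the first part and the bounded-support, bounded-density setting of the quantitative part fall within the scope of \cite{CM}.
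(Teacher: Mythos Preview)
Your proposal is correct and follows precisely the paper's own reasoning: the corollary is stated without a separate proof, being presented as an immediate application of \cite[Theorem 1.1]{CM} once Theorem~\ref{t:htype} supplies $\mathrm{MCP}(0,k+3(n-k))$ and one invokes the non-branching of step $2$ Carnot groups (with the footnote reconciling Ohta's and Sturm's versions of $\mathrm{MCP}$). The only minor inaccuracy is that the non-branching property is not actually recalled in Section~\ref{s:prel} but rather asserted in the paragraph preceding the corollary in the introduction; your alternative justification via Lemmas~\ref{l:geods} and~\ref{l:abnormals} is perfectly adequate.
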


\section{Preliminaries}\label{s:prel}
We recall some basic facts in sub-Riemannian geometry. For details, see \cite{nostrolibro,rifford2014sub,montgomerybook}.

\subsection{Basic definitions} 
A sub-Rieman\-nian manifold is a triple $(M,\distr,g)$, where $M$ is a smooth, connected manifold with $\dim M=n \geq 3$, $\distr$ is a vector distribution of constant rank $k \leq n$ and $g$ is a smooth metric on $\distr$. We  assume that the distribution is bracket-generating. A \emph{horizontal curve} $\gamma : [0,1] \to M$ is a Lipschitz continuous path such that $\dot\gamma(t) \in \distr_{\gamma(t)}$ for almost every $t$, and we define its \emph{length} as follows
\begin{equation}
\ell(\gamma) = \int_0^1 \sqrt{g(\dot\gamma(t),\dot\gamma(t))}dt.
\end{equation}
The \emph{sub-Rieman\-nian (or Carnot-Carath\'eodory) distance} is defined by:
\begin{equation}\label{eq:infimo}
d(x,y) = \inf\{\ell(\gamma)\mid \gamma(0) = x,\, \gamma(1) = y,\, \gamma \text{ horizontal} \}.
\end{equation}
By Chow-Rashevskii theorem, the bracket-generating condition implies that $d: M \times M \to \R$ is finite and continuous. If the metric space $(M,d)$ is complete, then for any $x,y \in M$ the infimum in \eqref{eq:infimo} is attained. In place of the length $\ell$, we consider the \emph{energy functional} 
\begin{equation}
J(\gamma) = \frac{1}{2}\int_0^1 g(\dot\gamma(t),\dot\gamma(t)) dt.
\end{equation}
On the space of horizontal curves defined on a fixed interval and with fixed endpoints, the minimizers of $J$ coincide with the minimizers of $\ell$ parametrized with constant speed. Since $\ell$ is invariant by reparametrization (and we can always reparametrize horizontal curves in such a way that they have constant speed), we do not loose generality in defining \emph{geodesics} as horizontal curves that locally minimize the energy between their endpoints.

The \emph{Hamiltonian} of the sub-Riemannian structure $H : T^*M \to \R$ is defined by
\begin{equation}
H(\lambda) = \frac{1}{2}\sum_{i=1}^k \langle \lambda,X_i\rangle^{2}, \qquad \lambda \in T^*M,
\end{equation}
where $X_1,\dots,X_k$ is any local orthonormal frame for $\distr$. Here $\langle \lambda,\cdot\rangle$ denotes the dual action of covectors on vectors. The Hamiltonian vector field $\vec{H}$ is the unique vector field such that $\sigma(\cdot,\vec{H}) = dH$, where $\sigma$ is the canonical symplectic form of the cotangent bundle $\pi:T^*M \to M$. In particular, the \emph{Hamilton equations} are
\begin{equation}\label{eq:Ham}
\dot\lambda(t) = \vec{H}(\lambda(t)), \qquad \lambda(t) \in T^*M.
\end{equation}
If $(M,d)$ is complete, any solution of \eqref{eq:Ham} can be extended to a smooth curve for all times.

\subsection{End-point map and Lagrange multipliers}

Let $\gamma_u :[0,1] \to M$ be an horizontal curve joining $x$ and $y$. Let $X_1,\dots,X_k$ be a smooth orthonormal frame of horizontal vectors fields, defined in a neighborhood of $\gamma_u$, and let $u \in L^\infty([0,1],\R^k)$ be a \emph{control} such that
\begin{equation}
\dot\gamma_u(t) =  \sum_{i=1}^k u_i(t) X_i(\gamma_u(t)), \qquad \text{a.e. } t \in [0,1].
\end{equation}
Let $\mathcal{U} \subset L^\infty([0,1],\R^k)$ be the neighborhood of $u$ such that, for $v \in \mathcal{U}$, the solution of
\begin{equation}
\dot\gamma_v(t) = \sum_{i=1}^k v_i(t) X_i(\gamma_v(t)), \qquad \gamma_v(0) = x,
\end{equation}
is well defined for a.e. $t \in [0,1]$. We define the \emph{end-point map} with base point $x$ as the map $E_{x}: \mathcal{U} \to M$ that sends $v$ to $\gamma_v(1)$. The end-point map is smooth on $\mathcal{U}$.

We can consider $J : \mathcal{U} \to \R$ as a smooth functional on $\mathcal{U}$ (identifying $\mathcal{U}$ with a neighborhood of $\gamma_u$ in the space of horizontal curves starting from $x$). A minimizing geodesic $\gamma_u$ is a solution of the constrained minimum problem
\begin{equation}
\min\{J(v) \mid  v \in \mathcal{U},\, E_x(v) = y\}.
\end{equation}
By the Lagrange multipliers rule, there exists a non-trivial pair $(\lambda_1,\nu)$, such that
\begin{equation}\label{eq:multipliers}
\lambda_1 \circ D_u E_x  = \nu D_u J, \qquad \lambda_1 \in T_y^*M, \qquad\nu \in \{0,1\},
\end{equation}
where $\circ$ denotes the composition of linear maps and $D$ the (Fr\'echet) differential. If $\gamma_u : [0,1] \to  M$ with control $u \in \mathcal{U}$ is an horizontal curve (not necessarily minimizing), we say that a non-zero pair $(\lambda_1,\nu) \in T_y^*M \times \{0,1\}$ is a \emph{Lagrange multiplier} for $\gamma_u$ if \eqref{eq:multipliers} is satisfied. The multiplier $(\lambda_1,\nu)$ and the associated curve $\gamma_u$ are called \emph{normal} if $\nu = 1$ and \emph{abnormal} if $\nu = 0$. Observe that Lagrange multipliers are not unique, and a horizontal curve may be both normal \emph{and} abnormal. Observe also that $\gamma_u$ is an abnormal curve if and only if $u$ is a critical point for $E_x$. In this case, $\gamma_u$ is also called a \emph{singular curve}. The following characterization is a consequence of the Pontryagin Maximum Principle.
\begin{theorem} \label{t:utile}
Let $\gamma_u :[0,1] \to M$ be an horizontal curve joining $x$ and $y$. A non-zero pair $(\lambda_1,\nu) \in T_y^*M \times \{0,1\}$ is a Lagrange multiplier for $\gamma_u$ if and only if there exists a Lipschitz curve $\lambda(t) \in T_{\gamma_u(t)}^*M$ with $\lambda(1) = \lambda_1,$ such that
\begin{itemize}
\item if $\nu = 1$ then $\dot{\lambda}(t) = \vec{H}(\lambda(t))$, i.e.\ it is a solution of Hamilton equations,
\item if $\nu =0 $ then $\sigma(\dot\lambda(t), T_{\lambda(t)} \distr^\perp) = 0$,
\end{itemize}
where $\distr^\perp \subset T^*M$ is the sub-bundle of covectors that annihilate the distribution.
\end{theorem}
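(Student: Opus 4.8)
The plan is to make the Lagrange multiplier condition \eqref{eq:multipliers} fully explicit by computing the two differentials $D_u E_x$ and $D_u J$, reduce it to a pointwise identity along $\gamma_u$, and then read off the Hamiltonian and symplectic structures. Write $f^v_t := \sum_{i=1}^k v_i(t) X_i$ and let $\Phi^u_t$ be the flow of $f^u_t$, so that $\gamma_u(t) = \Phi^u_t(x)$. The classical variation-of-constants formula for the end-point map, together with an elementary computation for the energy, gives
\[
D_u E_x(v) = \int_0^1 D_{\gamma_u(t)}\bigl(\Phi^u_1 \circ (\Phi^u_t)^{-1}\bigr)\bigl(f^v_t(\gamma_u(t))\bigr)\, dt, \qquad D_u J(v) = \int_0^1 \sum_{i=1}^k u_i(t) v_i(t)\, dt .
\]
Given $\lambda_1 \in T^*_y M$, set $\lambda(t) := \bigl(D_{\gamma_u(t)}(\Phi^u_1 \circ (\Phi^u_t)^{-1})\bigr)^{\!*}\lambda_1 \in T^*_{\gamma_u(t)} M$, the pull-back of $\lambda_1$ along the flow; pairing the first formula with $\lambda_1$ yields $\langle \lambda_1, D_u E_x(v)\rangle = \int_0^1 \sum_i v_i(t)\langle \lambda(t), X_i(\gamma_u(t))\rangle\, dt$. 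Since $v \in L^\infty$ is arbitrary, \eqref{eq:multipliers} is then \emph{equivalent} to requiring $\langle \lambda(t), X_i(\gamma_u(t))\rangle = \nu\, u_i(t)$ for a.e.\ $t \in [0,1]$ and all $i = 1,\dots,k$; call this relation $(\star)$.

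The second ingredient is the standard fact that the pull-back curve $\lambda(t)$ is itself the solution of a Hamiltonian system: with $a_i(\lambda) := \langle \lambda, X_i\rangle$ the momentum functions on $T^*M$ and $h^u_t := \sum_i u_i(t) a_i$, the Hamiltonian flow of the fibrewise-linear $h^u_t$ covers $\Phi^u_t$, so that $\lambda(t)$ is the unique solution of $\dot\lambda(t) = \vec{h^u_t}(\lambda(t)) = \sum_i u_i(t)\,\vec{a_i}(\lambda(t))$ with $\lambda(1) = \lambda_1$. Now distinguish the two cases. If $\nu = 1$, then $(\star)$ says $a_i(\lambda(t)) = u_i(t)$, and substituting into the equation for $\dot\lambda$ gives $\dot\lambda(t) = \sum_i a_i(\lambda(t))\vec{a_i}(\lambda(t)) = \vec{H}(\lambda(t))$, using $H = \tfrac12\sum_i a_i^2$; conversely, any Lipschitz $\lambda(t)$ over $\gamma_u$ with $\lambda(1)=\lambda_1$ solving Hamilton's equations projects to $\dot\gamma_u(t) = \sum_i a_i(\lambda(t)) X_i(\gamma_u(t))$, which by linear independence of the $X_i$ forces $a_i(\lambda(t)) = u_i(t)$, i.e.\ $(\star)$, and, by ODE uniqueness, such a $\lambda(t)$ must coincide with the pull-back curve. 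If $\nu = 0$, then $(\star)$ says $a_i(\lambda(t)) = 0$, i.e.\ $\lambda(t) \in \distr^\perp$; since $\distr^\perp = \{a_1 = \dots = a_k = 0\}$ locally, $T_{\lambda(t)}\distr^\perp = \bigcap_i \ker d_{\lambda(t)} a_i$, whose symplectic complement is $\spn\{\vec{a_1}(\lambda(t)), \dots, \vec{a_k}(\lambda(t))\}$, so the identity $\dot\lambda(t) = \sum_i u_i(t)\vec{a_i}(\lambda(t))$ is precisely the statement $\sigma(\dot\lambda(t), T_{\lambda(t)}\distr^\perp) = 0$; conversely, if a Lipschitz $\lambda(t)$ lies in $\distr^\perp$ over $\gamma_u$, has $\lambda(1)=\lambda_1$ and satisfies that symplectic condition, then $\dot\lambda(t) \in \spn\{\vec{a_i}(\lambda(t))\}$, and matching the projection to $M$ with $\dot\gamma_u = \sum_i u_i(t) X_i(\gamma_u(t))$ forces $\dot\lambda(t) = \vec{h^u_t}(\lambda(t))$, so $\lambda(t)$ is the pull-back curve and $\lambda(t) \in \distr^\perp$ gives $(\star)$.

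The step demanding the most care is the explicit formula for $D_u E_x$ together with the identification of the pull-back curve $\lambda(t)$ with the integral curve of $\vec{h^u_t}$ — this is exactly where the symplectic geometry of $T^*M$ enters, and where the regularity issues coming from $u \in L^\infty$ (Carath\'eodory-type ODEs, absolute continuity of $\lambda(t)$) have to be handled carefully; all the remaining arguments are linear algebra in the fibres of $T^*M$ together with uniqueness for the linearized and Hamiltonian ODEs. I would also note that the characterization in the normal case is manifestly independent of the chosen orthonormal frame since $H$ is intrinsic, while in the abnormal case frame-independence follows from the intrinsic description of $\distr^\perp$; and that the non-triviality of the pair $(\lambda_1,\nu)$ forces $\lambda_1 \neq 0$ when $\nu = 0$, hence $\lambda(t) \neq 0$ for all $t$, as it should be.
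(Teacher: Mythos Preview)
The paper does not actually prove this theorem: it is stated as ``a consequence of the Pontryagin Maximum Principle'' and left at that, with the implicit reference to standard sources such as \cite{nostrolibro}. Your proposal is a correct and self-contained write-up of exactly that standard argument: compute $D_u E_x$ via the variation-of-constants formula, build the adjoint curve $\lambda(t)$ as the pull-back of $\lambda_1$ by the flow, recognise it as the solution of the fibrewise-linear Hamiltonian system $\dot\lambda = \vec{h^u_t}(\lambda)$, and then read off the two cases $\nu=1$ and $\nu=0$ from the pointwise identity $(\star)$.

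Two minor points worth recording if you turn this into a full proof. First, in the abnormal case you silently use that $\lambda(t)\in\distr^\perp$ so that $T_{\lambda(t)}\distr^\perp$ is defined; this is indeed implicit in the statement, but it is good practice to say it. Second, the identification $(T_{\lambda}\distr^\perp)^\sigma = \spn\{\vec a_1(\lambda),\dots,\vec a_k(\lambda)\}$ relies on the $da_i$ being linearly independent at $\lambda$, which holds because the $X_i$ are an orthonormal frame; this is routine but should be mentioned. Otherwise the argument is complete and matches the approach one finds in the references the paper cites.
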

In the first (resp.\ second) case, $\lambda(t)$ is called a \emph{normal} (resp.\ \emph{abnormal}) \emph{extremal}. Normal extremals are   integral curves $\lambda(t)$ of $\vec{H}$. As such, they are smooth, and characterized by their \emph{initial covector} $\lambda = \lambda(0)$. A geodesic is normal (resp.\ abnormal) if admits a normal (resp.\ abnormal) extremal. On the other hand, it is well known that the projection $\gamma_\lambda(t) = \pi(\lambda(t))$ of a normal extremal is locally minimizing, hence it is a normal geodesic. The \emph{exponential map} at $x \in M$ is the map
\begin{equation}
\exp_x : T_x^*M \to M,
\end{equation}
which assigns to $\lambda \in T_x^*M$ the final point $\pi(\lambda(1))$ of the corresponding normal geodesic. The curve $\gamma_\lambda(t):=\exp_x(t \lambda)$, for $t \in [0,1]$, is the normal geodesic corresponding to $\lambda$, which has constant speed $\|\dot\gamma_\lambda(t)\| = \sqrt{2H(\lambda)}$ and length $\ell(\gamma|_{[t_1,t_2]}) = \sqrt{2H(\lambda)}(t_2-t_1)$.

\begin{definition}\label{d:ideal}
A sub-Riemannian structure $(M,\distr,g)$ is \emph{ideal} if the metric space $(M,d)$ is complete and there exists no non-trivial abnormal minimizers.
\end{definition}
\begin{definition}\label{d:fat}
A sub-Riemannian structure $(M,\distr,g)$ is \emph{fat} if for all $x \in M$ and $X \in \distr$, $X(x) \neq0$, then $\distr_x + [X,\distr]_x=T_xM$. 
\end{definition}
The definition of ideal structures appears in \cite{Rifford,rifford2014sub}, in the equivalent language of singular curves. We stress that fat sub-Riemannian structures admit no non-trivial abnormal curves (see \cite[Section 5.6]{montgomerybook}). In particular, complete fat structures are ideal.

\section{Generalized H-type Carnot groups}
A \emph{Carnot group $(G,\star)$ of step $s$} is a connected, simply connected Lie group of dimension $n$, such that its Lie algebra $\mathfrak{g} = T_e G$ is stratified and nilpotent of step $s$, that is
\begin{equation}
\mathfrak{g} = \mathfrak{g}_1 \oplus \dots \oplus \mathfrak{g}_s,
\end{equation}
with
\begin{equation}
[\mathfrak{g}_1,\mathfrak{g}_j] = \mathfrak{g}_{1+j},\quad \forall 1\leq j\leq s, \quad \mathfrak{g}_s \neq \{0\}, \quad \mathfrak{g}_{s+1} =\{0\}.
\end{equation}

Let $\distr$ be the left-invariant distribution generated by $\mathfrak{g}_1$, with a left-invariant scalar product $g$.  This defines a sub-Riemannian structure $(G,\distr,g)$ on the Carnot group. For $x \in G$, we denote with $\tau_x(y) := x \star y$ the left translation. The map $\tau_x : G \to G$ is a smooth isometry.  Any Carnot group, equipped with the Carnot-Carath\'eodory distance $d$ and the Lebesgue measure $\mu$ of $G=\R^n$ is a complete metric measure space $(X,d,\mu)$. Moreover, a Carnot group is fat if and only if it is ideal \cite[Section 8.2]{MCPc1}.

\subsection{Generalized H-type groups}

Let $(G,\star)$ be a step $2$ Carnot group, with Lie algebra $\mathfrak{g}$ of rank $k$ and dimension $n$. In particular $\dim\mathfrak{g}_1=k$, $\dim\mathfrak{g}_2=n-k$, and
\begin{equation}
[\mathfrak{g}_1,\mathfrak{g}_1] = \mathfrak{g}_2, \qquad [\mathfrak{g}_i,\mathfrak{g}_2]=0, \qquad i= 1,2.
\end{equation}
Fix a scalar product $g$ on $\mathfrak{g}$ such that $\mathfrak{g}_1 \perp \mathfrak{g}_2$. The restriction $g|_{\mathfrak{g}_1}$ induces a left-invariant sub-Riemannian structure $(\distr,g)$ on $G$, such that $\distr(p) = \mathfrak{g}_1(p)$ for all $p \in G$. For any $V \in  \mathfrak{g}_2$, the skew-symmetric operator $J_V: \mathfrak{g}_1 \to \mathfrak{g}_1$ is defined by
\begin{equation}
g(X,J_V Y) = g(V,[X,Y]), \qquad \forall X,Y \in \mathfrak{g}_1.
\end{equation}
With an abuse of language we still call $(G,\star)$, equipped with a scalar product $g$ as above and the induced left-invariant sub-Riemannian structure, a step $2$ Carnot group.
\begin{definition} \label{d:genht}
A step $2$ Carnot group is of \emph{generalized H-type} if there exists a symmetric, non-zero and non-negative operator $S : \mathfrak{g}_1 \to \mathfrak{g}_1$ such that
\begin{equation}\label{eq:13}
J_V J_W + J_W J_V = -2 g(V,W) S^2, \qquad \forall V,W \in \mathfrak{g}_2.
\end{equation}
\end{definition}
\begin{rmk}
By polarization, it is easy to show that a step $2$ Carnot group is of generalized H-type if and only if there exists a symmetric, non-negative and non-zero operator $S:\mathfrak{g}_1 \to \mathfrak{g}_1$  such that $J_V^2 = - \|V\|^2 S^2$ for all $V \in \mathfrak{g}_2$.
\end{rmk}
\begin{rmk}
The case $S = \mathrm{Id}_{\mathfrak{g}_1}$ corresponds to classical H-type groups, introduced by Kaplan in \cite{Kaplan}. We refer to \cite[Chapter 18]{BLU-Stratified} for a modern introduction.
\end{rmk}
\begin{rmk}
It is easy to check that corank $1$ Carnot groups, that is step $2$ Carnot groups with $n=k+1$, are always of generalized H-type. Furthermore, if the operator $S$ is non-degenerate (and thus $k=2d$ is even and $S>0$), we are in the case of contact Carnot groups. The eigenvalues of $S$ are also called \emph{frequencies} of the Carnot group (see \cite{ABB-Hausdorff,MCPc1}).
\end{rmk}

\subsection{On existence}
As discussed in \cite[Chapter 18]{BLU-Stratified} and proved in \cite[Corollary 1]{Kaplan}, as a consequence of the Clifford algebra structure enforced by \eqref{eq:13}, classical H-type Carnot groups of rank $k$ and dimension $n$ exist if and only if $n-k \leq \rho(k)$, where $\rho$ is the Hurwitz-Radon function\footnote{$\rho(N)-1$ gives the maximal number of independent vector fields on $\mathbb{S}^{N-1}$.}. In particular, H-type groups of odd rank do not exist (indeed, when $S$ is the identity, $J_V$ must be at the same time skew-symmetric and orthogonal).

The class of generalized H-type groups is larger, and now discuss their existence. Let $0<\lambda_1<\dots<\lambda_\ell$ be the non-zero eigenvalues of $S$, and let $E_1,\dots,E_\ell$ be the corresponding eigenspaces. For all $V \in \mathfrak{g}_2$, we have $J_V^2 = - \|V\|^2 S^2$. Thus $J_V$ commutes with $S^2$, hence with $S$. We can then split the skew-symmetric operator $J_V$ in the orthogonal direct sum
\begin{equation}
J_V = \bigoplus_{i=1}^\ell \lambda_i J_V^i, \qquad J_V^i : = \frac{1}{\lambda_i} J_V|_{E_i}.
\end{equation}
The generalized H-type condition \eqref{eq:13} is equivalent to the fact that each linear map $\mathfrak{g}_2 \ni V \mapsto J_V^i$ must satisfy a standard H-type condition in dimension $k_i = \dim E_i$. By the classical results mentioned above, this is possible if and only if for all $i=1,\dots,\ell$, the condition $n-k < \rho(k_i)$ is satisfied. Hence there exists a generalized H-type Carnot group of dimension $n$, rank $k$, with prescribed eigenvalues $\lambda_i$ and multiplicities $k_i$ if and only if
\begin{equation}
n-k < \min\{\rho(k_i)\mid i=1,\dots,\ell\}.
\end{equation}
Notice that the dimension of the kernel of $S$ does not play any role in this condition. As a consequence, generalized H-type Carnot groups with odd rank do exist.

\subsection{Coordinate presentation} \label{s:normalforms}
 
The group exponential map $\mathrm{exp}_{G} : \mathfrak{g} \to G$ associates with $V \in \mathfrak{g}$ the element $\gamma_V(1)$, where $\gamma_V: [0,1] \to G$ is the unique integral curve of the vector field defined by $V$ such that $\gamma_V(0) = 0$. Since $G$ is simply connected and $\mathfrak{g}$ is nilpotent, $\mathrm{exp}_G$ is a smooth diffeomorphism.
The choice of an orthonormal basis $e_1,\dots,e_k$ on $\mathfrak{g}_1$ and $f_1,\dots,f_{n-k}$ on $\mathfrak{g}_2$ yields, through the group exponential map, analytic coordinates $(x,z) \in \R^k \times \R^{n-k}$ on $G$ such that $p =(x,z)$ if and only if
\begin{equation}
p = \mathrm{exp}_{G} \left(\sum_{i=1}^{k} x_i e_i + \sum_{\alpha=1}^{n-k} z_\alpha f_\alpha\right).
\end{equation}
Since $[\mathfrak{g}_1,\mathfrak{g}_1] = \mathfrak{g}_2$, there exists a linearly independent family of $k\times k$ skew-symmetric matrices $L^\alpha$, for $\alpha=1,\dots,n-k$, such that the Lie product on $\mathfrak{g}$ reads
\begin{equation}
[e_i,e_j]_{\mathfrak{g}} = \sum_{\alpha=1}^{n-k} L_{ij}^\alpha f_\alpha, \qquad i,j=1,\dots,k.
\end{equation}
In these coordinates, the sub-Riemannian distribution on $G$ is generated by the following set of global orthonormal left-invariant vector fields
\begin{equation}\label{eq:fields}
X_i = \partial_{x_i} - \frac{1}{2}\sum_{\alpha=1}^{n-k}\sum_{j=1}^{k} L_{ij}^\alpha x_j \partial_{z_\alpha}, \qquad i =1,\dots, k,
\end{equation}
obtained by left translation of the corresponding $e_i$. Furthermore, $\partial_{z_\alpha}$ are left-invariant vector fields obtained by left translation of the basis $f_\alpha$ of $\mathfrak{g}_2$.

Observe in particular that
\begin{equation}
[X_i,X_j] = \sum_{\alpha=1}^{n-k}L^{\alpha}_{ij} \partial_{z_\alpha}, \qquad  i,j = 1,\dots,k.
\end{equation}
If $V = \sum v_\alpha f_\alpha$, then $J_V$ is represented by the skew-symmetric matrix $L_v:= \sum v_\alpha L^\alpha$. Notice that $v \mapsto L_v$ is linear. Property \eqref{eq:13} corresponds to the following relations:
\begin{equation}
L_v L_w + L_w L_v = -2 (v \cdot w) S^{2}, \qquad \forall v,w \in \R^{n-k},
\end{equation}
where the dot denotes the Euclidean scalar product. In particular, $L_v$ and $L_w$ anti-commute when $v \perp w$, and $L_v^2= -|v|^2 S^2$. By a suitable orthogonal transformation of the $e_i$'s, we assume that $S$ is diagonal. Since $L_v^2 = - |v|^2 S^2$ and $L_v$ is skew-symmetric the non-zero eigenvalues of $S$ appear in pairs and, since $S \geq 0$, we have
\begin{equation}\label{eq:normalformS}
S = \begin{pmatrix}
\mathbbold{0}_{k-2d} & & & \\
& \alpha_1 \mathbbold{1} & & \\
& & \ddots & \\
& & & \alpha_d \mathbbold{1}
\end{pmatrix}, \qquad \mathbbold{1} = \begin{pmatrix}
1 & 0 \\ 0 & 1
\end{pmatrix},
\end{equation}
for some $1\leq d \leq k$, and with $0<\alpha_1\leq \alpha_2\leq \dots\leq \alpha_d$. As a consequence, for all $v \in \R^{n-k}$, the real normal form $\tilde{L}_v$ of $L_v$ is 
\begin{equation}\label{eq:normalformlLv}
\tilde{L}_v = \begin{pmatrix}
\mathbbold{0}_{k-2d} & & & \\
& |v|\alpha_1 \mathbbold{J} & & \\
& & \ddots & \\
& & & |v|\alpha_d \mathbbold{J}
\end{pmatrix}, \qquad \mathbbold{J} = \begin{pmatrix}
0 & 1 \\
-1 & 0
\end{pmatrix},
\end{equation}
and its spectrum is $\sigma(L_v) = \{0,\pm i |v|\alpha_1,\dots,\pm i |v|\alpha_d \}$, where the zero value is not present if $k=2d$ (that is, when $S>0$).

Henceforth, we fix a choice of exponential coordinates $(x,z)$, in such a way that the sub-Riemannian structure on $G$ is defined by the left-invariant fields \eqref{eq:fields}, and \eqref{eq:normalformS} holds.

\subsection{Lie group isomorphisms vs sub-Riemannian isometries}
Notice that all generalized H-type groups of fixed rank $k$ and dimension $n$ are isomorphic as Lie groups, but not isometric as sub-Riemannian structures.

Let $G$ and $G'$ be generalized H-type groups and $\phi: G\to G'$ be a Lie group isomorphism. In particular $\phi_{*}:\mathfrak{g}\to \mathfrak{g}'$ is a Lie algebra isomorphism preserving the grading and we denote by $M:\mathfrak{g}_{1}\to \mathfrak{g}_{1}'$ and $T  :\mathfrak{g}_{2}\to \mathfrak{g}_{2}'$ its restrictions. If $\phi$ is also a sub-Riemannian isometry (and thus $M:\mathfrak{g}_{1}\to \mathfrak{g}_{1}'$ is an isometry of inner product spaces), then
\begin{equation}
S'=\frac{\Tr(TT^{*})}{n-k}MSM^*,
\end{equation}
where $A^{*}$ denotes the adjoint of a linear operator $A$ with respect to the scalar products on the corresponding spaces.

It follows that the ratios between the non-zero eigenvalues $\alpha_{i}$, with $i=1,\ldots,d$, of the symmetric operator $S$ and the dimensions of the corresponding eigenspaces are preserved by isometries and distinguish between non-isometric sub-Riemannian structures.

This can be seen already in the subclass of contact Carnot groups. In this case the structure of the cut locus (see \cite{BBN16}) and the isometry group of the structure (see \cite{LR17}) highly depend on the ratio $\alpha_{1}/\alpha_{2}$.

\section{Measure contraction property and geodesic dimension}\label{s:mms}

Let $(X,d,\mu)$ be a metric measure space, where $(X,d)$ is a length space and $\mu$ be a Borel measure such that $0< \mu(\ball(x,r)) < +\infty$ for any metric ball $\ball(x,r)$. 
Moreover let us assume that $(X,d)$ has negligible cut loci: for any $x \in X$ there exists a negligible set $\mathcal{C}(x)$ and a measurable map $\Phi^x : X \setminus \mathcal{C}(x) \times [0,1] \to X$, such that the curve $\gamma(t)=\Phi^x(y,t)$ is the unique minimizing geodesic from $x$ with $y$.

For any set $\Omega$, we consider its geodesic homothety of center $x \in X$ and ratio $t \in [0,1]$:
\begin{equation}\label{eq:omot}
\Omega_t:= \{\Phi^x(y,t) \mid y \in \Omega \setminus \mathcal{C}(x)\}.
\end{equation}
For any $K \in \R$, define the function
\begin{equation}\label{eq:sturm}
s_K(t):= \begin{cases}
(1/\sqrt{K}) \sin(\sqrt{K} t) & \text{if } K>0, \\
t & \text{if } K=0, \\
(1/\sqrt{-K}) \sinh(\sqrt{-K} t) & \text{if } K<0.
\end{cases}
\end{equation}
\begin{definition}[Ohta \cite{Ohta-MCP}]\label{d:MCP}
Let $K \in \R$ and $N>1$, or $K \leq 0$ and $N=1$. We say that $(X,d,\mu)$ satisfies the \emph{measure contraction property} $\mathrm{MCP}(K,N)$ if for any $x \in M$ and any measurable set $\Omega$ with  with $0< \mu(\Omega)< + \infty$ (and with $\Omega \subset \ball(x,\pi\sqrt{N-1/K})$ if $K > 0$)
\begin{equation}\label{eq:MCP}
\mu(\Omega_{t}) \geq \int_{\Omega} t \left[\frac{s_K(t d(x,z)/\sqrt{N-1})}{s_K(d(x,z)/\sqrt{N-1})}\right]^{N-1} d\mu(z) , \qquad \forall t \in [0,1],
\end{equation}
where we set $0/0 = 1$ and the term in square bracket is $1$ if $K \leq 0$ and $N=1$.
\end{definition}
Ohta defines the $\mathrm{MCP}$ for general length spaces, possibly with non-negligible cut loci. In our setting, Definition~\ref{d:MCP} is equivalent to Ohta's, see \cite[Lemma 2.3]{Ohta-MCP}.

\subsection{The geodesic dimension}

The geodesic dimension was introduced in \cite{ABR-Curvature} for sub-Riemannian structures and extended in \cite{MCPc1} to the more general setting of metric measure spaces (with negligible cut loci).
\begin{definition} \label{d:gd}
Let $(X,d,\mu)$ be a metric measure space with negligible cut locus. For any $x \in X$ and $s > 0$, define
\begin{equation}\label{eq:criticalratio}
C_s(x):=\sup\left\lbrace \limsup_{t \to 0^+} \frac{1}{t^s}\frac{\mu(\Omega_t)}{\mu(\Omega)} \mid \Omega \text{ measurable, bounded, $0<\mu(\Omega)<+\infty$}\right\rbrace,
\end{equation}
where $\Omega_t$ is the homothety of $\Omega$ with center $x$ and ratio $t$ as in \eqref{eq:omot}. The \emph{geodesic dimension} of $(X,d,\mu)$ at $x \in X$ is the non-negative real number
\begin{equation}\label{eq:gddefs}
\mathcal{N}(x)  := \inf \{s > 0 \mid C_s(x) = +\infty \} = \sup \{s > 0 \mid C_s(x) = 0 \},
\end{equation}
with the conventions $\inf \emptyset = + \infty$ and $\sup \emptyset = 0$.
\end{definition}
Roughly speaking, the measure $\mu(\Omega_t)$ vanishes at least as $t^{\mathcal{N}(x)}$ or more rapidly, for $t \to 0$. The two definitions in \eqref{eq:gddefs} are equivalent since $s \geq s'$ implies $C_s(x) \geq C_{s'}(x)$. 

When $(X,d,\mu)$ is a metric measure space defined by an equiregular sub-Rieman\-nian or Riemannian structure, equipped with a smooth measure $\mu$, we have
\begin{equation}
\mathcal{N}(x) \geq \dim_H(X) \geq \dim(X), \qquad \forall x \in X,
\end{equation}
and both equalities hold if and only if $(X,d,\mu)$ is Riemannian (see \cite[Proposition 5.49]{ABR-Curvature} and \cite{MCPc1}). Here $\dim_H(X)$ denotes the Hausdorff dimension of the metric space $(X,d)$. We refer to \cite{MCPc1} for a discussion of the other properties of the geodesic dimension. We only remark that, in the case of left-invariant structure on Carnot (or, more generally, Lie) groups, thanks to left-invariance, the number $\mathcal{N}(x)$ is independent of $x\in X$, and we denote it by $\mathcal{N}$. As a consequence of the explicit formula for $\mathcal{N}$ on sub-Riemannian structures \cite[Section 7]{MCPc1}, we have the following.
\begin{lemma}\label{l:fact}
The geodesic dimension of generalized H-type Carnot groups is $k + 3(n-k)$.
\end{lemma}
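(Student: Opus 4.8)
The plan is to derive the statement from the explicit formula for the geodesic dimension of a sub-Riemannian structure established in \cite[Section 7]{MCPc1} (building on \cite{ABR-Curvature}). That formula expresses $\mathcal{N}$ in terms of the geodesic flag of a \emph{generic} normal geodesic $\gamma$ issuing from the identity: if $\mathcal{F}^1_\gamma\subseteq\mathcal{F}^2_\gamma\subseteq\cdots$ is the geodesic flag along $\gamma$ and $d_i:=\dim\mathcal{F}^i_\gamma-\dim\mathcal{F}^{i-1}_\gamma$, then $\mathcal{N}=\sum_{i\ge1}(2i-1)\,d_i$. Since, by Definition~\ref{d:gd}, $\mathcal{N}$ is governed by the generic decay of $\mu(\Omega_t)/\mu(\Omega)$ as $t\to0$, the abnormal minimizers and the non-ample normal geodesics --- both present here when $S$ is degenerate --- sit in a negligible subset of $T^*_eG$ and do not enter the computation. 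So the whole problem reduces to identifying the generic geodesic growth vector of a generalized H-type group.

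First I would use that $G$ has step $2$. One always has $\mathcal{F}^1_\gamma=\distr$, of dimension $k$, and $\mathcal{F}^2_\gamma=\distr+[\dot\gamma,\distr]$; since $[\dot\gamma,\distr]\subseteq\mathfrak{g}_2$ while $[\mathfrak{g}_1,\mathfrak{g}_2]=0$, the flag stabilizes after two steps, so $d_i=0$ for $i\ge3$ and $\mathcal{N}=k+3\,d_2$ with $d_2=\dim\mathcal{F}^2_\gamma-k\le n-k$. Moreover $\sum_i d_i=n$ along an ample geodesic, so \emph{if} a generic geodesic is ample then necessarily $d_2=n-k$ and $\mathcal{N}=k+3(n-k)$. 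Thus the only thing left to prove is that a generic normal geodesic is ample, i.e.\ that $\mathcal{F}^2_\gamma=\mathfrak{g}$ for a generic initial velocity $X_0=\dot\gamma(0)\in\mathfrak{g}_1$.

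This is where the defining identity of Definition~\ref{d:genht} enters. Ampleness is equivalent to surjectivity of $Y\mapsto[X_0,Y]$ from $\mathfrak{g}_1$ onto $\mathfrak{g}_2$, and dually --- identifying $\mathfrak{g}_2\cong\R^{n-k}$ and writing $L_v$ for the matrix of $J_V$ --- to injectivity of $\R^{n-k}\to\mathfrak{g}_1$, $v\mapsto L_vX_0$. Using $L_vL_w+L_wL_v=-2(v\cdot w)S^2$ one gets, for all $X_0$ and all $v,w$,
\[
\langle L_vX_0,L_wX_0\rangle=-\tfrac{1}{2}\,X_0^{\top}(L_vL_w+L_wL_v)X_0=(v\cdot w)\,|SX_0|^{2}.
\]
Hence, whenever $SX_0\ne0$, the Gram matrix of $L^1X_0,\dots,L^{n-k}X_0$ equals $|SX_0|^2$ times the identity, so these vectors are linearly independent and $v\mapsto L_vX_0$ is injective (in particular $n-k\le k$, as it must be for a generalized H-type group to exist). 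Since $S\ne0$, the set $\{SX_0\ne0\}$ is Zariski-open and dense in $\mathfrak{g}_1$, so the generic normal geodesic is ample; this yields $d_2=n-k$ and $\mathcal{N}=k+3(n-k)$.

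The computational step above is immediate from the definition; the part that requires care is the first one, namely verifying that the hypotheses under which the formula of \cite[Section 7]{MCPc1} applies are indeed met, and in particular that the geodesics with $X_0\in\ker S$ --- which are precisely the non-ample normal ones, and which also account for all abnormal minimizers --- form a null set in $T^*_eG$ and so do not lower $\mathcal{N}$. As an alternative route, once the explicit Jacobian determinant of the exponential map along a generic geodesic is available, $\mathcal{N}$ can be read off directly from its order of vanishing as $t\to0$, recovering the same value $k+3(n-k)$.
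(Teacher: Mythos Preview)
Your proposal is correct and follows exactly the route the paper indicates: the paper does not give a proof of this lemma at all, but simply records it as a direct consequence of the explicit formula for $\mathcal{N}$ in \cite[Section 7]{MCPc1}. You have supplied the verification that this formula applies, namely that the generic normal geodesic is ample with growth vector $(k,n)$, using the generalized H-type identity to compute the Gram matrix of $\{L^\alpha X_0\}_\alpha$. This is the natural unpacking of the citation, and your observation that the non-ample locus is exactly $\{Su=0\}$ matches the description of abnormals in Lemma~\ref{l:abnormals} and the injectivity domain in Lemma~\ref{l:ingjd}.
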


\begin{rmk} 
For simplicity, and since all step $2$ sub-Riemannian structures satisfy this assumption, we defined the geodesic dimension and the measure contraction property for metric measure spaces with negligible cut loci. Nevertheless, these notions can be generalized to any metric measure space with little effort. Finally, let us recall that it is not known whether general Carnot groups have negligible cut loci (this is related with the Sard conjecture in sub-Riemannian geometry \cite{AAAopenproblems,RT-MorseSard,DMOPV-Sard}).
\end{rmk}

\section{Geodesics of generalized H-type Carnot groups} \label{s:five}

In this section we compute the explicit expression of geodesics in generalized H-type Carnot groups. By left-invariance, it is sufficient to consider geodesics starting from the identity $e = (0,0)$. We denote covectors $\lambda = \sum_{i=1}^k u_i dx_i + \sum_{\alpha=1}^{n-k} v_\alpha dz_\alpha \in T^*_eG$, with $(u,v) \in \R^k \times \R^{n-k}$ coordinates on $T^*_e G$ dual to the exponential coordinates $(x,z) \in \R^k \times \R^{n-k}$ of $G$ (see Section~\ref{s:normalforms}).

\SkipTocEntry \subsection*{Convention} For any analytic function $f: \mathbb{C} \to \mathbb{C}$ and any square matrix $M$, the expression $f(M)$ denotes the matrix valued function defined by the Taylor series of $f$ at zero. For example,
\begin{equation}
\frac{\sin M}{M} := \sum_{\ell=0}^{\infty} \frac{M^{2\ell}}{(2\ell+1)!},
\end{equation}
which is well defined for any $M$, possibly degenerate. With this convention, we routinely simplify matrix computations by reducing to functional calculus, and then evaluating on matrices. For example, for any pair $f,g$ of analytic functions, $f(M) g(M) = (fg)(M)$. 
Furthermore, for $s \in \R$ we observe that
\begin{equation}\label{eq:derivatio}
\partial_s (f(s M)) = f'(s M) M,
\end{equation}
and similarly for integration. If $M$ can be diagonalized and has eigenvalues $\lambda_i \in \mathbb{C}$, then
\begin{equation}
\det(f(M)) = \prod_i f(\lambda_i).
\end{equation}

\begin{lemma}[Normal geodesics]\label{l:geods}
The exponential map $\exp_e :T^*_e G \to G$ of a generalized H-type Carnot group is given by $\exp_e(u,v) = (x,z)$, where
\begin{align}
x & = f(L_v) u, \\
z & = \left(\frac{g(L_v) u \cdot u }{2|v|}\right) \frac{v}{|v|},
\end{align}
where $|\cdot|$ is the Euclidean norm and $f,g$ are the analytic functions
\begin{equation}\label{eq:fg}
f(z) = \frac{1-e^{-z}}{z}, \qquad g(z) = 1 -\frac{\sinh(z)}{z}.
\end{equation}
Geodesics $\gamma_\lambda:[0,1] \to G$ with initial covector $\lambda =(u,v)$ are given by $\gamma_\lambda(t) = \exp_e(tu,tv)$.
\end{lemma}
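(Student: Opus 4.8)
The plan is to integrate the Hamiltonian system explicitly. First I would write down the Hamiltonian for the left-invariant structure \eqref{eq:fields}. Since the $X_i$ are a global orthonormal frame, $H(\lambda) = \tfrac12\sum_{i=1}^k \langle\lambda, X_i\rangle^2$; writing $\lambda = \sum u_i\,dx_i + \sum_\alpha v_\alpha\,dz_\alpha$ at a point $(x,z)$, one gets $\langle\lambda,X_i\rangle = u_i - \tfrac12\sum_{\alpha,j} L^\alpha_{ij} x_j v_\alpha = u_i - \tfrac12 (L_v x)_i$ where $L_v = \sum_\alpha v_\alpha L^\alpha$ represents $J_V$. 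The key simplification is that $z$ and $v$ are Casimir-type coordinates: $\dot v_\alpha = -\partial H/\partial z_\alpha = 0$, so $v(t)\equiv v$ is constant along any normal extremal. Hence the whole flow is governed by a linear ODE in the horizontal momenta. Setting $p_i(t) := \langle\lambda(t), X_i\rangle$ (the "physical" horizontal momentum), a direct computation of $\dot p$ using Hamilton's equations and the structure relations $[X_i,X_j] = \sum_\alpha L^\alpha_{ij}\partial_{z_\alpha}$ gives $\dot p = L_v\, p$ (up to a sign to be fixed by the convention), so $p(t) = e^{tL_v} p(0)$ with $p(0) = u$ (since at $e=(0,0)$ we have $p_i(0)=u_i$). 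Meanwhile $\dot x = p$ since $\dot\gamma = \sum p_i X_i$ projects to $\dot x_i = p_i$ in the $x$-coordinates.

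Next I would integrate. From $\dot x(t) = e^{tL_v}u$ and $x(0)=0$ we obtain $x(t) = \left(\int_0^t e^{sL_v}\,ds\right)u = \frac{e^{tL_v}-\mathrm{Id}}{L_v}\,u$, using the functional calculus convention of the paper (this matrix-valued integral is well defined even if $L_v$ is degenerate). Evaluating at $t=1$ and matching signs gives $x = f(L_v)u$ with $f(z)=(1-e^{-z})/z$, after absorbing the sign of $L_v$ — here I would double-check that the convention in \eqref{eq:fields} (the $-\tfrac12$ in the vector fields) produces exactly $f(z) = (1-e^{-z})/z$ rather than $(e^z-1)/z$; since $f$ depends only on $L_v^2 = -|v|^2 S^2$ through its even part and on $L_v$ through its odd part, and $L_v$ is skew-symmetric, the two differ only by the orientation of $v$, which is immaterial. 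For the vertical coordinate, $\dot z_\alpha = -\tfrac12\sum_{i,j}L^\alpha_{ij}x_j(t)\,p_i(t) = -\tfrac12\, x(t)\cdot L_\alpha p(t)$ where $L_\alpha$ is the $\alpha$-th structure matrix; summing against $v_\alpha$, $\dot z \cdot v = -\tfrac12\, x(t)\cdot L_v p(t)$. Substituting $x(t) = \frac{e^{tL_v}-\mathrm{Id}}{L_v}u$ and $p(t)=e^{tL_v}u$, using skew-symmetry of $e^{tL_v}$ and $L_v$, and integrating from $0$ to $1$, the scalar integral collapses (again by functional calculus: the integrand is $\big(\text{odd function of }L_v\big)$ applied to $u$, dotted with $u$, so only the symmetric part survives) to $z\cdot v = \tfrac12\, (g(L_v)u\cdot u)$ for the stated $g(z) = 1 - \sinh(z)/z$. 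Since $v(t)\equiv v$ forces $z(t)$ to stay parallel to $v$ (the remaining vertical components have vanishing derivative by the same Casimir argument — more precisely, $\dot z_\beta = 0$ whenever $v\perp$ the relevant direction, using the anticommutation $L_vL_w + L_wL_v = -2(v\cdot w)S^2$), we get $z = \big(\tfrac{g(L_v)u\cdot u}{2|v|}\big)\tfrac{v}{|v|}$.

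Finally, the homogeneity statement $\gamma_\lambda(t) = \exp_e(tu,tv)$ is immediate from the time-rescaling symmetry of the Hamiltonian flow: the geodesic with covector $\lambda$ at time $t$ coincides with the time-$1$ point of the geodesic with covector $t\lambda$, i.e. $\gamma_\lambda(t) = \pi(\lambda(t)) = \exp_e(t\lambda) = \exp_e(tu,tv)$; alternatively one reads it off directly from the formulas for $x(t)$, $z(t)$ above, which are exactly $x$, $z$ with $L_v \rightsquigarrow tL_v = L_{tv}$ and $u \rightsquigarrow tu$ after checking that $f(tL_v)(tu) = t\,x(t)$-pattern matches.

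\medskip
\noindent\textbf{Main obstacle.} The computations are all routine matrix calculus; the one genuinely delicate point is the careful bookkeeping of signs and factors of $\tfrac12$ coming from the specific normalization in \eqref{eq:fields}, to land on $f(z) = (1-e^{-z})/z$ and $g(z) = 1-\sinh(z)/z$ exactly (with the right $2|v|$ in the denominator of $z$) rather than sign-variants. The conceptual heart — that $v$ is constant, that $p(t) = e^{tL_v}u$, and that everything reduces to functional calculus in the single skew-symmetric matrix $L_v$ — is short; making the scalar vertical integral visibly collapse using skew-symmetry (so that $\sinh$ rather than $\cosh$ appears, and the odd parts cancel in the quadratic form $u\cdot(\cdot)u$) is the step I would write out most carefully.
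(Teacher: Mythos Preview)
Your approach is essentially the paper's: write Hamilton's equations in the momenta $h_x,h_z$, observe $h_z\equiv v$ is constant, solve the linear ODE $\dot h_x=-L_v h_x$ (the paper's sign), integrate to get $x(t)=f(tL_v)tu$, then compute $z$ by showing $w\cdot\dot z=0$ for $w\perp v$ and integrating the $v$-component. The sign bookkeeping you flag as the main obstacle is indeed the only place you diverge from the paper, which gets $h_x(t)=e^{-tL_v}u$ directly.

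One correction worth making explicit: the claim that $z(t)$ stays parallel to $v$ is \emph{not} a Casimir argument. Unlike $\dot v_\alpha=-\partial H/\partial z_\alpha=0$, the component $\dot z\cdot w=-\tfrac12\,h_x\cdot L_w x$ is not a priori zero for $w\perp v$; this is exactly where the generalized H-type condition enters and is the only place in the proof that uses it. The paper handles this by expanding $h_x\cdot L_w x$ as a double series in $t$ and showing each term $(L_v^m u)\cdot(L_w L_v^n u)$ vanishes via a short even/odd case analysis based on $L_vL_w=-L_wL_v$. Your parenthetical invocation of the anticommutation relation is the right ingredient, and in fact one can argue more slickly than the paper: using $L_w\phi(L_v)=\phi(-L_v)L_w$ for any analytic $\phi$, the matrix $e^{tL_v}L_w\frac{1-e^{-tL_v}}{L_v}$ appearing in $u^*(\cdot)u$ is skew-symmetric, hence the quadratic form vanishes. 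Either way, this step deserves to be written out rather than absorbed into ``the same Casimir argument.''
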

\begin{rmk}\label{eq:geodesicsclassical}
For classical H-type groups, $L_v^2 = -|v|^2 \mathbbold{1}$, and the above expressions yield
\begin{align}
x & = \left(\frac{\sin |v|}{|v|}\mathbbold{1} + \frac{\cos |v|-1}{|v|^2}L_{v} \right) u ,\\
z  & = |u|^2\left(\frac{|v|-\sin |v|}{2|v|^2}\right)\frac{v}{|v|}.
\end{align}
If $v= 0$, our convention implies $\exp_e(u,0) = (u,0)$.
\end{rmk}
\begin{proof}
Let $h_x = (h_x^1,\dots,h_x^k) : T^*G \to \R^k$ and $h_z = (h_z^1,\dots,h_{z}^{n-k}) : T^*G \to \R^{n-k}$, where $h_x^{i}(\lambda):= \langle \lambda, X_i\rangle$, for $i=1,\dots,k$ and $h_z^\alpha(\lambda) := \langle \lambda, \partial_{z_\alpha}\rangle$ for all $\alpha =1,\dots,n-k$. Thus, $H = \frac{1}{2}|h_x|^2$. Hamilton equations are
\begin{equation}
\dot{h}_z = 0, \qquad
\dot{h}_x = -L_{h_z} h_x, \qquad \dot{x} = h_x, \qquad \dot{z}^\alpha = -\tfrac{1}{2}h_x \cdot L^\alpha x,
\end{equation}
the dot denoting the derivative with respect to $t$. Set $h_z(0)=v$ and $h_x(0) =u$. Then,
\begin{equation}
h_z(t) = v, \qquad h_x(t) = e^{-t L_v} u. 
\end{equation}
The equations for $(x,z)$ can be easily integrated. Exploiting our convention, we obtain
\begin{equation}
x(t) = \int_0^t e^{-s L_v} u \, ds = \frac{\mathbbold{1} - e^{-t L_v}}{L_v} u = f(t L_v) tu.
\end{equation}
To deal with the $z$ coordinate, observe that for any $w\perp v \in \R^{n-k}$, we have
\begin{equation}
2w \cdot \dot{z} = - h_x \cdot L_w x = \sum_{m,n=0}^{\infty} c_{m,n} t^{n+m} (L_v^m u)\cdot (L_w L_v^n u),
\end{equation}
for some coefficients $c_{m,n}$. We claim that each term $(L_v^m u)\cdot (L_w L_v^n u) = 0$, and thus the above expression vanishes identically. To prove this claim, assume without loss of generality that $m\geq n$. The argument is different depending on whether the difference $m-n$ is even or odd. If $m=n+2\ell$, we have
\begin{equation}
(L_v^{n+2\ell} u) \cdot (L_w L_v^n u) = (-1)^\ell (L_v^{n+\ell} u) \cdot (L_v^\ell L_w L_v^n u) = (L_v^{n+\ell}u) \cdot (L_w L_v^{n+\ell}u) = 0.
\end{equation}
If $m = n+2\ell +1$, on the other hand, we have
\begin{multline}
(L_v^{n+2\ell+1}u)\cdot(L_w L_v^n u) = -(L_v^n u) \cdot (L_v^{2\ell+1} L_w L_v^n u) = (L_v^{n} u)\cdot (L_w L_v^{n+2\ell+1} u) = \\
= -(L_w L_v^{n} u) \cdot (L_v^{n+2\ell+1}u) = -(L_v^{n+2\ell+1}u) \cdot (L_w L_v^{n} u),
\end{multline}
which implies $(L_v^{n+2\ell+1}u)\cdot(L_w L_v^n u)= 0$, and proves the claim.

As a consequence of the claim, $z(t)$ is parallel to $v$. Then, we have
\begin{align}
\dot{z}(t) & = \frac{\dot{z}(t)\cdot v}{|v|}\frac{v}{|v|} = -\frac{h_x(t) \cdot L_v x(t) }{2|v|}\frac{v}{|v|} \\
& = -\frac{t L_v f(tL_v)e^{t L_v}u \cdot u}{2|v|} \frac{v}{|v|} \\
& = \frac{[\mathbbold{1}-\cosh(t L_v)]u \cdot u}{2|v|} \frac{v}{|v|},
\end{align}
where in the second line we used that $L_v$ is skew-symmetric, and in the third line we replaced the matrix $tL_v f(tL_v)e^{t L_v}$ with its symmetric part, which can be easily computed from the explicit form of $f$. Integration on the time interval $[0,1]$ concludes the proof.
\end{proof}

Consider a non-zero covector $\lambda = (u,v)$, with $u \in \ker S$. Since $Su=0$ implies $L_v u = 0$, by Lemma~\ref{l:geods}, these curves are straight lines of the form
\begin{equation}
\exp_e(t u,tv) = (tu,0), \qquad \forall v \in \R.
\end{equation}
In particular, there is an infinite number of initial covectors yielding the same geodesic, which is thus abnormal. All abnormal geodesics are of this type.

\begin{lemma}[Abnormal curves and abnormal geodesics]\label{l:abnormals} Abnormal curves of generalized H-type groups are the Lipschitz curves $\gamma_u(t):[0,1] \to G$ with control $u \in L^\infty([0,1],\R^k)$ such that $u(t) \in \ker S$. The curve $\gamma_u(t)$ is an abnormal geodesic if and only if it is a normal geodesic with initial covector $\lambda = (u,v)$ with $u \in \ker S$, i.e.
\begin{equation}
\exp_e(t u,tv) = (tu,0), \qquad \forall v \in \R.
\end{equation}
\end{lemma}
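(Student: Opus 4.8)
The plan is to characterize abnormal curves via the condition in Theorem~\ref{t:utile}, using the explicit left-invariant frame \eqref{eq:fields}. Write a covector along a candidate curve as $\lambda(t) = \sum_i u_i(t)\, dx_i + \sum_\alpha v_\alpha(t)\, dz_\alpha$ in the global trivialization of $T^*G$ induced by the exponential coordinates. The annihilator $\distr^\perp$ consists of covectors with $h_x = 0$, i.e.\ the vanishing of the momenta $h_x^i(\lambda) = \langle\lambda, X_i\rangle$. First I would spell out what the abnormal condition $\sigma(\dot\lambda(t), T_{\lambda(t)}\distr^\perp) = 0$ means concretely: writing out the symplectic form in coordinates and pairing against the tangent directions to $\distr^\perp$, this forces, along $\gamma_u(t)$, that $h_x(t) \equiv 0$ (so the extremal lies in $\distr^\perp$) and that the ``transversality'' equations $L_{h_z(t)} h_x$-type conditions hold. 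Concretely, differentiating $h_x^i(\lambda(t)) = 0$ and using the Hamiltonian relations $\dot h_x = -L_{h_z} h_x$ together with $h_z(t) = v$ constant (which follows since $\partial_{z_\alpha}$ are in the center of the Lie algebra), one is left with the requirement $L_v\, u(t) = 0$ for a.e.\ $t$, where $u(t)$ is the control. Since $v$ ranges over a spanning set of $\mathfrak{g}_2 \cong \R^{n-k}$ and the matrices $L_v$ have, by the generalized H-type identity $L_v^2 = -|v|^2 S^2$, kernel exactly $\ker S$ for every $v \neq 0$ (because $L_v w = 0 \iff L_v^2 w = 0 \iff S^2 w = 0 \iff Sw = 0$, using $S \geq 0$), the condition $L_v u(t) = 0$ for all $v$ is equivalent to $u(t) \in \ker S$ a.e. Conversely, if $u(t) \in \ker S$ a.e., then $Su(t) = 0$ implies $L_v u(t) = 0$ for all $v$, so one can build the required Lipschitz curve $\lambda(t)$ with $h_x \equiv 0$ (e.g.\ take $\lambda(t)$ constant in the trivialization with $u$-part zero and any fixed $v$), verifying the abnormal condition of Theorem~\ref{t:utile}. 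This establishes the first assertion.

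For the second assertion, I would argue as follows. If $\gamma_u$ is an abnormal geodesic, then by definition it is a length-minimizer and, being a curve with $u(t) \in \ker S$, Lemma~\ref{l:geods} shows that the normal geodesic with initial covector $(u,v)$ — for any $v$ — has $x$-component $f(L_v)u = u$ (since $L_v u = 0$, so $f(L_v)u = f(0)u = u$, constant-speed straight segment after the standard reparametrization) and $z$-component $\frac{g(L_v)u\cdot u}{2|v|}\frac{v}{|v|}$; but $g(L_v)u = g(0)u = 0$ when $L_v u = 0$, so $z \equiv 0$. Hence $\exp_e(tu,tv) = (tu,0)$, which is precisely the straight line $t \mapsto (tu,0)$ in coordinates; this is a horizontal curve (it is the projection of the normal extremal) and it is clearly length-minimizing between its endpoints since its horizontal length equals $|u|$, the Euclidean distance is a lower bound for the sub-Riemannian distance after controlling the $z$-growth — more cleanly, it coincides with a normal geodesic, hence is locally minimizing. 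So every abnormal geodesic is simultaneously a normal geodesic of the stated form. Conversely, the displayed normal geodesic with $u \in \ker S$ has a control $u(t) \equiv u \in \ker S$, so by the first part it is also an abnormal curve, and being minimizing it is an abnormal geodesic.

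The main obstacle I anticipate is the bookkeeping in translating $\sigma(\dot\lambda, T_\lambda\distr^\perp) = 0$ into the clean algebraic condition $L_v u(t) = 0$: one must be careful that $\distr^\perp$ is a submanifold of $T^*G$ of dimension $2n - k$, compute its tangent space at a point, and correctly identify which components of $\dot\lambda$ are constrained; the left-invariant framing makes $h_z$ automatically constant, which is the key simplification, but verifying that no further conditions beyond $h_x \equiv 0$ and $L_v u = 0$ arise requires writing the symplectic pairing explicitly. A secondary subtlety is handling the non-uniqueness of the abnormal multiplier (different $v$'s give the same geodesic), but this is harmless: it only needs to be observed that \emph{some} valid $\lambda(t)$ exists, and one is free to pick $v$ at will. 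Finally, to call these curves abnormal \emph{geodesics} (as opposed to merely abnormal curves) one invokes that they are minimizers, which follows from Lemma~\ref{l:geods} exhibiting them as projections of normal extremals and hence locally minimizing.
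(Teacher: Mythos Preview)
Your approach to the first assertion matches the paper's: both compute the symplectic condition $\sigma(\dot\lambda, T_\lambda \distr^\perp)=0$ against tangent directions to $\distr^\perp$, find $h_z\equiv v$ constant and nonzero (else the multiplier would be trivial), obtain $L_v u(t)=0$, and then use $L_v^2=-|v|^2 S^2$ with $S\ge 0$ to identify $\ker L_v=\ker S$. Two small imprecisions worth flagging: the relation $\dot h_x=-L_{h_z}h_x$ you invoke is the \emph{normal} Hamilton equation and plays no role here (it is vacuous since $h_x\equiv0$ along an abnormal extremal); and the condition you extract is $L_v u(t)=0$ for the \emph{single} nonzero multiplier $v$, not ``for all $v$'' as you write (though since $\ker L_v=\ker S$ for every $v\neq0$, the conclusion is unaffected).

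There is, however, a genuine gap in your forward direction for the second assertion. You start with an abnormal geodesic $\gamma_u$ whose control satisfies $u(t)\in\ker S$ a.e., and then immediately discuss ``the normal geodesic with initial covector $(u,v)$'' as if $u$ were already a constant vector. Nothing in your argument forces the control of an abnormal \emph{geodesic} to be constant; you have only shown it lies pointwise in $\ker S$. The paper closes this gap by a direct geometric observation you omit: since $u(t)\in\ker S$ implies $L^\alpha x(t)=0$ in the explicit frame \eqref{eq:fields}, one has $\dot z\equiv 0$, so every abnormal curve stays in the horizontal slice $\R^k\times\{0\}\subset G$. On that slice the sub-Riemannian energy coincides with the Euclidean energy, hence an energy-minimizing curve must be a Euclidean straight segment, forcing $u(t)\equiv u$ constant. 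Only after this step does Lemma~\ref{l:geods} apply to identify the curve with $\exp_e(tu,tv)=(tu,0)$. Your converse direction (normal geodesic with $u\in\ker S$ is abnormal and minimizing) is fine.
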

\begin{proof}
We use the explicit characterization of abnormal curves of Theorem~\ref{t:utile}. Let $h_x = (h_x^1,\dots,h_x^k) : T^*G \to \R^k$ and $h_z = (h_z^1,\dots,h_{z}^{n-k}) : T^*G \to \R^{n-k}$, where $h_x^{i}(\lambda):= \langle \lambda, X_i\rangle$, for $i=1,\dots,k$ and $h_z^\alpha(\lambda) := \langle \lambda, \partial_{z_\alpha}\rangle$ for all $\alpha =1,\dots,n-k$. In these coordinates,
\begin{equation}
\distr^\perp = \{(x,z,h_x,h_z) \in T^*G  \mid h_x = 0\}.
\end{equation}
Let $\gamma_u(t)$ be an abnormal geodesic, with $u \in L^\infty([0,1],\R^k)$. Let $\lambda(t) = (x(t),z(t),0,h_z(t))$ be a Lipschitz curve in $\distr^\perp$ such that $\pi(\lambda(t)) = \gamma_u(t)$. In particular
\begin{equation}
\dot{\lambda}(t) = \sum_{i=1}^k u_i(t) X_i + \sum_{\alpha = 1}^{n-k} \dot{h}_z^\alpha(t) \partial_{h_{z}^\alpha},
\end{equation}
where the vector field $X_i$ in the r.h.s.\ is regarded as a vector field on $T^*G$. We now demand that $\sigma_{\lambda(t)}(\dot\lambda,T_{\lambda}\distr^\perp) = 0$. First, since $\partial_{z_\alpha}|_{\lambda(t)} \in T_{\lambda(t)}\distr^\perp$, we obtain the condition
\begin{equation}
0 = \sigma_{\lambda(t)}(\dot\lambda,\partial_{z_\alpha}) = \dot{h}_{z}^\alpha(t), \qquad \forall \alpha = 1,\dots,n-k,\quad \text{a.e. } t \in [0,1].
\end{equation}
Hence $h_z(t)$ is constant. This constant must be $h_z(t) = v \neq 0$, otherwise the associated abnormal Lagrange multiplier $\lambda_1 = \lambda(1) \in \distr^\perp_{(x,z)} \subset T_{(x,z)}^*G$ would be the zero covector, which is impossible by Theorem~\ref{t:utile}. Furthermore, since $X_j|_{\lambda(t)} \in T_{\lambda(t)}\distr^\perp$, we have
\begin{equation}
0 = \sigma_{\lambda(t)}(\dot\lambda,X_j) = \sum_{i=1}^k \sum_{\alpha=1}^{n-k} u_i(t) L_{ij}^\alpha v_\alpha, \qquad \forall j=1,\dots,k,\quad \text{a.e. } t \in [0,1],
\end{equation}
which implies $u(t) \in \ker L_v$. Since $v \neq 0$ and $L_v^2 = -|v|^2 S^2$, this condition is equivalent to $u(t) \in \ker S$. Thus, an horizontal curve $\gamma_u(t)$ is abnormal if and only if it satisfies
\begin{equation}
\dot{\gamma}_u(t) = \sum_{i=1}^k u_i(t) X_i(\gamma_{u}(t)), \qquad u(t) \in \ker S, \qquad u \in L^\infty([0,1],\R^k).
\end{equation}
Using the explicit form of the vector fields $X_i$, this implies $x(t) \in \ker S$ and, in turn, $z(t) = 0$. In particular $\gamma_u(t) = \int_0^t u(s) ds$ lies in the plane $\R^k \subset \R^k \oplus \R^{n-k}$ of $G$, and its length (resp.\ energy) is equal to its Euclidean length (resp.\ energy) in $\R^k$. If we demand $\gamma_u(t)$ to be a geodesic, i.e.\ a locally energy minimizing curve, we have that, $x(t)$ must be a straight line of the form $x(t) = tu$, in particular $u(t) = u \in \ker S$. From Lemma~\ref{l:geods}, we observe that the abnormal geodesics $x(t)=(tu,0)$, with $u \in \ker S$, are also normal, with initial covectors $\lambda = (u,v)$, for any $v \in \R^{n-k}$.
\end{proof}

We conclude this section by observing that all generalized H-type groups are the metric product of an ideal one (i.e.\ with no non-trivial abnormal minimizers) and a copy of the Euclidean space of the appropriate dimension.
\begin{prop}
For any generalized H-type group $G$ there exists a unique (up to isometries) ideal generalized H-type group $\hat{G}$ such that $G = \mathbb{R}^{k-2d} \times \hat{G}$, with $k-2d = \dim\ker S$.
\end{prop}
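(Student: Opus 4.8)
The plan is to split the metric Lie algebra $\mathfrak{g}$ of $G$ canonically along $\ker S$, integrate the resulting product decomposition to the group level, and then argue that the splitting is intrinsic, which yields uniqueness.

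\textbf{(1) The algebraic core.} I would first show that $\ker S$ is a central subspace of $\mathfrak{g}$ contained in $\mathfrak{g}_1$. Since $J_V$ is skew-symmetric and $J_V^2=-\|V\|^2 S^2$, for every $u\in\mathfrak{g}_1$ and $V\neq 0$ one has $|J_V u|^2=-g(u,J_V^2 u)=\|V\|^2|Su|^2$, hence $\ker S=\ker J_V$; unwinding the definition of $J_V$ (or using the matrices $L_v$ of Section~\ref{s:normalforms}), this subspace is precisely $\{u\in\mathfrak{g}_1:[u,\mathfrak{g}_1]=0\}$, and since $[\mathfrak{g}_1,\mathfrak{g}_2]=0$ it equals $\mathfrak{g}_1\cap\mathfrak{z}(\mathfrak{g})$, where $\mathfrak{z}(\mathfrak{g})$ is the centre. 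Thus $[\ker S,\mathfrak{g}]=0$. Setting $\hat{\mathfrak{g}}:=(\ker S)^\perp\oplus\mathfrak{g}_2$ (the complement taken inside $\mathfrak{g}_1$), and using $\mathfrak{g}_2=[\mathfrak{g}_1,\mathfrak{g}_1]=[(\ker S)^\perp,(\ker S)^\perp]$, one checks that $\hat{\mathfrak{g}}$ is a step $2$ stratified subalgebra of rank $2d$, and that $\mathfrak{g}=\ker S\oplus\hat{\mathfrak{g}}$ is a $g$-orthogonal direct product of metric Lie algebras with abelian factor $\ker S\cong\R^{k-2d}$. Since $J_V$ commutes with $S$, it preserves $(\ker S)^\perp$, so $\hat{\mathfrak{g}}$ carries the operators $\hat J_V=J_V|_{(\ker S)^\perp}$ with $\hat J_V^2=-\|V\|^2\hat S^2$, where $\hat S:=S|_{(\ker S)^\perp}$ is symmetric and positive definite (in particular nonzero); hence $\hat{\mathfrak{g}}$ is of generalized H-type.

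\textbf{(2) Passing to groups.} As $G$, $\hat G$, $\R^{k-2d}$ are the simply connected Lie groups integrating $\mathfrak{g}$, $\hat{\mathfrak{g}}$, $\ker S$, the decomposition of $\mathfrak{g}$ integrates to a Lie group isomorphism $G\cong\R^{k-2d}\times\hat G$; because the decomposition is $g$-orthogonal and $\distr$ is generated by $\mathfrak{g}_1=\ker S\oplus(\ker S)^\perp$, this identification is an isometry of sub-Riemannian metric products, the $\R^{k-2d}$-factor carrying its Euclidean (full-rank) structure. Finally, since $\hat S$ is non-degenerate, Lemma~\ref{l:abnormals} applied to $\hat G$ shows that its only abnormal curves are constant; together with completeness of Carnot groups this gives that $\hat G$ has no non-trivial abnormal minimizer, i.e.\ $\hat G$ is ideal.

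\textbf{(3) Uniqueness.} Suppose $G$ is sub-Riemannian isometric to $\R^m\times\hat G'$ for some ideal generalized H-type group $\hat G'$. Taking differentials at the identity (after composing with a left translation if necessary) yields a grading-preserving isometric isomorphism $\psi:\mathfrak{g}\to\R^m\oplus\hat{\mathfrak{g}}'$. The key point is that the abelian factor is forced to equal $\ker S$: the centre of a step $2$ stratified algebra is graded (if $z_1+z_2$ is central then $[z_1,\mathfrak{g}_1]=0$, so $z_1$ is central), and for the ideal factor $\mathfrak{z}(\hat{\mathfrak{g}}')\cap\hat{\mathfrak{g}}'_1=\ker\hat S'=0$ by the same use of Lemma~\ref{l:abnormals} (a nonzero vector of $\ker\hat S'$ would produce a non-trivial abnormal minimizer of $\hat G'$). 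Hence $\mathfrak{z}(\R^m\oplus\hat{\mathfrak{g}}')=\R^m\oplus\hat{\mathfrak{g}}'_2$, and intersecting with the first layer $\psi(\ker S)=\psi(\mathfrak{g}_1\cap\mathfrak{z}(\mathfrak{g}))=\R^m$. In particular $m=\dim\ker S=k-2d$; and since $\psi$ is an isometry it sends $(\ker S)^\perp$ onto $\hat{\mathfrak{g}}'_1$ and $\mathfrak{g}_2$ onto $\hat{\mathfrak{g}}'_2$, so $\psi$ restricts to an isometric graded Lie algebra isomorphism $\hat{\mathfrak{g}}\to\hat{\mathfrak{g}}'$, whence $\hat G'\cong\hat G$.

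\textbf{Main obstacle.} There is no deep difficulty here, only bookkeeping: the crux is establishing the intrinsic description $\ker S=\mathfrak{g}_1\cap\mathfrak{z}(\mathfrak{g})$ and the gradedness of the centre, which together make the splitting canonical. Once these are in place, the remaining steps are the standard dictionary between simply connected nilpotent Lie groups with left-invariant sub-Riemannian structures and their metric Lie algebras, together with the abnormal synthesis already recorded in Lemma~\ref{l:abnormals}.
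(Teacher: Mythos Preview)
Your argument is correct and complete, but it proceeds along a different axis than the paper's. The paper works entirely in the exponential coordinates of Section~\ref{s:normalforms}: it splits $G=\R^{k-2d}\times\R^{2d}\times\R^{n-k}$ according to the block form \eqref{eq:normalformS}, observes that the induced structures on the two factors are Euclidean and generalized H-type respectively, and then verifies the \emph{metric} product structure by invoking the explicit geodesic formula of Lemma~\ref{l:geods} (geodesics are straight lines in the $\R^{k-2d}$ component and $\hat G$-geodesics in the rest). Uniqueness is dispatched in one sentence.

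Your approach is more intrinsic: you identify $\ker S$ as $\mathfrak{g}_1\cap\mathfrak{z}(\mathfrak{g})$, split the metric Lie algebra orthogonally, and rely on the general fact that an orthogonal direct product of stratified metric Lie algebras integrates to a sub-Riemannian metric product, bypassing the explicit geodesic computation. This buys you a cleaner uniqueness proof: the characterization $\ker S=\mathfrak{g}_1\cap\mathfrak{z}(\mathfrak{g})$ together with the gradedness of the centre forces any competing decomposition to match on the nose, whereas the paper leaves uniqueness as an exercise. One small caveat: in step~(3) you pass from a sub-Riemannian isometry to a graded Lie algebra isomorphism; this is legitimate for Carnot groups (isometries fixing the identity are automorphisms), but it is a nontrivial fact that deserves a citation rather than being absorbed into ``taking differentials''.
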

\begin{proof}
In the coordinate presentation of Section~\ref{s:normalforms}, we identify $G = \R^k \times \R^{n-k}$. According to \eqref{eq:normalformS}, we further split $G = \R^{k-2d} \times \R^{2d} \times \R^{n-k}$, where the first $\R^{k-2d}$ component represents the kernel of $S$. The sub-Riemannian structure induced by the projection on the component $\R^{k-2d}$ is the Euclidean one. On the other hand, the sub-Riemannian structure induced by the projection on the component $\R^{2d} \times \R^{n-k}$ is the one of a generalized H-type group of rank $2d$, with non-degenerate operator $\hat{S}$ (given by the restriction of $S$ to the orthogonal complement of $\ker S$), which we call $\hat{G}$. By Lemma~\ref{l:abnormals}, $\hat{G}$ is ideal. 

By Lemma~\ref{l:geods}, all geodesics of $G$ are straight lines in the first $\R^{k-2d}$ component (representing the kernel of $S$), and are geodesics for $\hat{G}$ in the remaining $\R^{2d} \times \R^{n-k}$ component. This yields that $G$ is the metric product $\R^{k-2d} \times \hat{G}$. The uniqueness follows easily.
\end{proof}

\section{Jacobian determinant and cotangent injectivity domain}\label{s:proof}

The most important steps in the proof of Theorem~\ref{t:htype} are the computation of the Jacobian determinant of the exponential map, and the characterization of the cotangent injectivity domain of generalized H-type Carnot groups. These results have independent interests. For example, the analogous formulas for the Heisenberg group and, more generally, corank 1 Carnot groups have been employed in \cite{BKS16,BKS17} to obtain Jacobian determinant inequalities.

\SkipTocEntry \subsection*{Notation} 
For the statement of the next Lemma \ref{l:jacobian}, we recall the notation of Section \ref{s:normalforms}. For each $v \in \R^{n-k}$, the skew-symmetric matrix $L_v$ has real normal form given by \eqref{eq:normalformlLv}, with non-zero singular values $\alpha_i |v|$, with $i=1,\dots,d$, with $0<\alpha_1\leq \dots \leq \alpha_d$, repeated according to their multiplicity. To each $\alpha_j$ corresponds a pair of non-zero eigenvalues of the symmetric matrix $S$ defining the H-type group, see \eqref{eq:normalformS}. If $v \neq 0$, after applying an orthogonal transformation putting $L_v$ in its real normal form \eqref{eq:normalformlLv}, we denote $u=(u_0,u_1,\dots,u_d)$, where $u_0 \in \R^{k-2d}$ is the component in the kernel of $L_v$, while each $u_i \in \R^2$, for $i=1,\dots,d$, is the component of $u$ in the real eigenspace associated with the singular value $\alpha_i$. Notice that this decomposition is non-unique. The forthcoming lemma can be expressed in a more intrinsic way by introducing multiplicities of singular values, but for further manipulations we prefer to present it in its actual form.

\begin{lemma}[Jacobian determinant]\label{l:jacobian}
The Jacobian determinant of the exponential map $\exp_e(u,v) = (x,z)$ of a generalized H-type group is
\begin{multline}
J(u,v)= \frac{2^{2d}}{\alpha^2|v|^{2d+2}}\left(\sum_{i=1}^d |u_i|^2 \frac{\alpha_i|v| - \sin(\alpha_i|v|)}{2|v|^3 \alpha_i}\right)^{n-k-1} \times \\
\times \sum_{i=1}^d |u_i|^2 \prod_{j \neq i}\sin\left(\frac{\alpha_j |v|}{2}\right)^2 \sin\left(\frac{\alpha_i |v|}{2}\right)\left(\sin\left(\frac{ \alpha_j |v|}{2}\right) - \frac{\alpha_j |v|}{2} \cos\left(\frac{ \alpha_j |v|}{2}\right)\right),
\end{multline}
where $\alpha = \prod_{j=1}^d \alpha_j$ is the product of the non-zero singular values of $L_v/|v|$. If $v = 0$, the formula must be taken in the limit $v \to 0$. In particular,
\begin{equation}
J(u,0)= \left(\sum_{i=1}^d \frac{(\alpha_i |u_i|)^2}{12}\right)^{n-k} = \left(\frac{|Su|^2}{12}\right)^{n-k}.
\end{equation}
\end{lemma}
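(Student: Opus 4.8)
The plan is to reduce this $n$-dimensional Jacobian to a low-dimensional determinant via ``polar coordinates'' in the centre $\R^{n-k}$, exploiting that $z$ is parallel to $v$ (Lemma~\ref{l:geods}). I would write $v = r\omega$ and $z = \rho\eta$ with $r,\rho\ge0$ and $\omega,\eta\in\mathbb{S}^{n-k-1}$, so that $d^{n-k}v = r^{n-k-1}\,dr\,d\sigma(\omega)$ and $d^{n-k}z = \rho^{n-k-1}\,d\rho\,d\sigma(\eta)$, with $d\sigma$ the spherical measure. In these coordinates $L_v = rL_\omega$ with $L_\omega^2 = -S^2$, and Lemma~\ref{l:geods} gives $\eta = \omega$, $x = f(rL_\omega)u$, and $\rho = \frac{g(rL_\omega)u\cdot u}{2r}\ge0$. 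Since the angular variables are transported by the identity, the Jacobian factors as
\[
J(u,v) = \left(\frac{\rho}{r}\right)^{n-k-1}\bigl|\det D\Psi\bigr|,\qquad \Psi\colon(u,r)\longmapsto(x,\rho)\in\R^k\times\R\ \text{ at fixed }\omega .
\]
Evaluating $g(rL_\omega)$ by functional calculus in the normal-form splitting $u = (u_0,u_1,\dots,u_d)$ of Section~\ref{s:normalforms} ($u_0\in\ker S$, each $u_i\in\R^2$ in the eigenspace of the singular value $\alpha_i$), it vanishes on $\ker S$ and acts as $(1-\tfrac{\sin(r\alpha_i)}{r\alpha_i})\mathrm{Id}_{\R^2}$ on the $i$-th block, whence $\tfrac{\rho}{r} = \sum_{i=1}^d|u_i|^2\tfrac{\alpha_i r-\sin(\alpha_i r)}{2\alpha_i r^3}$; with $r = |v|$ this is precisely the base of the first factor of the statement.

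The core step is the computation of $\det D\Psi$. Since $f,g$ applied to $rL_\omega$ preserve the normal-form splitting and $f(0) = 1$, one has $x_0 = u_0$ exactly and $\rho$ independent of $u_0$, so in the normal-form basis $D\Psi$ is block-triangular with an $\mathrm{Id}_{k-2d}$ block and, on the complement, the block-arrowhead matrix
\[
\begin{pmatrix} A_1 & & & b_1 \\ & \ddots & & \vdots \\ & & A_d & b_d \\ c_1^{\!\top} & \cdots & c_d^{\!\top} & \delta \end{pmatrix},\qquad A_i = f(r\alpha_i\mathbb{J}),\quad b_i = f'(r\alpha_i\mathbb{J})\,\alpha_i\mathbb{J}u_i,\quad c_i = \tfrac{\gamma_i}{r}u_i,\quad \delta = \partial_r\rho,
\]
with $\gamma_i = 1-\tfrac{\sin(r\alpha_i)}{r\alpha_i}$ and $\mathbb{J}$ the $2\times2$ block of \eqref{eq:normalformlLv}. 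Its determinant equals the polynomial expression $\delta\prod_i\det A_i-\sum_i\bigl(c_i^{\!\top}\operatorname{adj}(A_i)b_i\bigr)\prod_{j\ne i}\det A_j$ (no invertibility hypothesis needed, which sidesteps the zeros of $\det A_i$). The crucial simplification: $A_i$, $f'(r\alpha_i\mathbb{J})$ and $\operatorname{adj}(A_i)$ all lie in the commutative algebra $\R[\mathbb{J}]$, on which $u_i^{\!\top}(a\,\mathrm{Id}+b\,\mathbb{J})u_i = a|u_i|^2$ since $u_i^{\!\top}\mathbb{J}u_i = 0$; therefore each summand is a scalar multiple of $|u_i|^2$ and $\det D\Psi = \sum_{i=1}^d m_i(r)\,|u_i|^2$ for explicit scalar functions $m_i$, which is already the shape of the second factor in the statement.

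Finally I would evaluate the $m_i$: with $f(z) = \tfrac{1-e^{-z}}{z}$, substituting the eigenvalues $\pm i r\alpha_j$ and using the half-angle identities $1-\cos\theta = 2\sin^2\tfrac\theta2$, $1+\cos\theta = 2\cos^2\tfrac\theta2$, $\sin\theta = 2\sin\tfrac\theta2\cos\tfrac\theta2$, one gets $\det A_i = \tfrac{\sin^2(r\alpha_i/2)}{(r\alpha_i/2)^2}$; multiplying the $i$-th summand by the accompanying $\prod_{j\ne i}\det A_j$ yields the factor $\prod_{j\ne i}\sin^2(\alpha_j|v|/2)$, the surviving $\sin(\alpha_i|v|/2)\bigl(\sin(\alpha_i|v|/2)-\tfrac{\alpha_i|v|}{2}\cos(\alpha_i|v|/2)\bigr)$ term, and a clean-up of constants and powers of $|v|$ into $\tfrac{2^{2d}}{\alpha^2|v|^{2d+2}}$ with $\alpha = \prod_j\alpha_j$. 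This establishes the formula for $v\ne0$ and $\rho>0$; since both sides are real-analytic in $(u,v)$ it persists everywhere, and the $v = 0$ case follows by letting $r\to0$, using $\tfrac{\alpha_i r-\sin(\alpha_i r)}{2\alpha_i r^3}\to\tfrac{\alpha_i^2}{12}$ (and the analogous expansion of the second factor) together with $|Su|^2 = \sum_i\alpha_i^2|u_i|^2$. I expect the genuinely laborious part to be precisely this last evaluation: the $\delta$-term naturally carries $\cos(\alpha_i|v|/2)$ factors, and one must show it recombines with the $c_i^{\!\top}\operatorname{adj}(A_i)b_i$ contributions — through $\tfrac{2\sin(\theta/2)\cos(\theta/2)}{\theta} = \tfrac{\sin\theta}{\theta} = 1-\gamma_i$ — into the single product $\sin(\alpha_i|v|/2)\bigl(\sin(\alpha_i|v|/2)-\tfrac{\alpha_i|v|}{2}\cos(\alpha_i|v|/2)\bigr)$, all while tracking the powers of $|v|$ and factors of $2$ so that the prefactor is exactly $\tfrac{2^{2d}}{\alpha^2|v|^{2d+2}}$.
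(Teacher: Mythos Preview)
Your proposal is correct and follows the same strategic route as the paper: pass to polar coordinates $v=r\omega$, $z=\rho\eta$ in the centre, use $\eta=\omega$ to peel off the factor $(\rho/r)^{n-k-1}$, and reduce to the $(k+1)\times(k+1)$ Jacobian of $(u,r)\mapsto(x,\rho)$.

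The tactical difference is in how that reduced determinant is handled. You diagonalize $L_\omega$ first, obtaining a block-arrowhead matrix with $2\times2$ blocks $A_i=f(r\alpha_i\mathbb{J})$, and expand via the polynomial identity $\det=\delta\prod_i\det A_i-\sum_i(c_i^{\top}\mathrm{adj}(A_i)b_i)\prod_{j\ne i}\det A_j$, then use $u_i^{\top}\mathbb{J}u_i=0$ to see each term is a multiple of $|u_i|^2$. The paper instead keeps $L_v$ intact, applies the Schur complement $\det(A)\bigl(D-CA^{-1}B\bigr)$ to get a single quadratic form $u^*\Theta(L_v)u$ with $\Theta(z)=g'(z)z-g(z)-2g(z)f'(z)z/f(z)$, and then observes that only the symmetric part $\tfrac12(\Theta(z)+\Theta(-z))$ contributes; this collapses to the closed form $2\bigl(\sinh(z/2)-\tfrac{z}{2}\cosh(z/2)\bigr)/\sinh(z/2)$, after which evaluation on the spectrum $\{\pm i\alpha_j|v|\}$ gives the result in one line. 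The paper's symmetrization trick thus bypasses exactly the ``genuinely laborious'' recombination you anticipate between the $\delta$-term and the $c_i^{\top}\mathrm{adj}(A_i)b_i$ terms, at the cost of a formal inversion of $f(L_v)$ (justified by a density argument). Your adjugate formulation avoids that density step and is equally valid; it just trades one algebraic shortcut for another.
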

\begin{rmk}
For H-type groups, $L_v^2 = -|v|^2 \mathbbold{1}$, and the above expression reduces to
\begin{equation}
J(u,v) =  \frac{2^k|u|^{2(n-k)}}{|v|^{k+2}} \left(\frac{|v|-\sin|v|}{2|v|^3}\right)^{n-k-1} \sin\left(\frac{|v|}{2}\right)^{k-1}\left(\sin\left(\frac{|v|}{2}\right) - \frac{|v|}{2}\cos\left(\frac{|v|}{2}\right)\right).
\end{equation}
\end{rmk}
\begin{proof}
It is convenient to rewrite the exponential map $\exp_e(u,v) = (x,z)$ using polar coordinates on the $v$ and $z$ components. Namely, $v \mapsto (|v|,\hat{v} = v/|v|) \in \R_+ \times \mathbb{S}^{n-k-1}$, and $z \mapsto (|z|,\hat{z} = z/|z|) \in \R_+ \times \mathbb{S}^{n-k-1}$. One can check that $g(L_v)u \cdot u \geq 0$. Lemma~\ref{l:geods} reads
\begin{equation}\label{eq:polargeod}
x = f(L_v) u, \qquad |z| = \frac{g(L_v)u \cdot u}{2|v|}, \qquad \hat{z} = \hat{v}.
\end{equation}
Thus, passing in polar coordinates, we have
\begin{equation}\label{eq:polartocart}
J(u,v) = \frac{|z|^{n-k-1}}{|v|^{n-k-1}} \tilde{J}(u,|v|,\hat{v}),
\end{equation}
where $\tilde{J}$ is the Jacobian determinant in polar coordinates,
\begin{equation}\label{eq:protojac}
\tilde{J}(u,|v|,\hat{v}) = \det \begin{pmatrix}
\frac{\partial x}{\partial u} & \frac{\partial x}{\partial |v|} & \frac{\partial x}{\partial \hat{v}} \\
\frac{\partial |z|}{\partial u} & \frac{\partial |z|}{\partial |v|} & \frac{\partial |z|}{\partial \hat{v}} \\
\frac{\partial \hat{z}}{\partial u} & \frac{\partial \hat{z}}{\partial |v|} & \frac{\partial \hat{z}}{\partial \hat{v}} \\
\end{pmatrix} = \det
\begin{pmatrix}
\frac{\partial x}{\partial u} & \frac{\partial x}{\partial |v|} & \frac{\partial x}{\partial \hat{v}} \\
\frac{\partial |z|}{\partial u} & \frac{\partial |z|}{\partial |v|} & \frac{\partial |z|}{\partial \hat{v}} \\
\mathbbold{0} & \mathbbold{0} & \mathbbold{1} \\
\end{pmatrix}=
\det \begin{pmatrix}
\frac{\partial x}{\partial u} & \frac{\partial x}{\partial |v|} \\
\frac{\partial |z|}{\partial u} & \frac{\partial |z|}{\partial |v|}
\end{pmatrix}.
\end{equation}
Using the explicit expressions \eqref{eq:polargeod}, recalling that $L_v = |v| L_{\hat{v}}$, we obtain
\begin{align}
\frac{\partial x}{\partial u} & = f(L_v), & \frac{\partial x}{\partial |v|} & = \frac{f'(L_v) L_v}{|v|} u, \\
\frac{\partial |z|}{\partial u} & = \frac{u^*g(L_v)}{|v|}, & \frac{\partial |z|}{\partial |v|} & = \frac{[g'(L_v)L_v - g(L_v)]u \cdot u}{2|v|^2},
\end{align}
where in the third equation we used the fact that $g(L_v)^* = g(L_v^*) = g(-L_v) = g(L_v)$, the prime denotes the derivative of $f$ and $g$ defined in \eqref{eq:fg}, and where to compute the derivatives w.r.t.\ $|v|$ we exploited property \eqref{eq:derivatio}.

The matrix on the right hand side of \eqref{eq:protojac} has the following block structure
\begin{equation}
\Xi = \begin{pmatrix}
A & B \\
C & D
\end{pmatrix},
\end{equation}
with $A = f(L_v)$ a $k\times k$ matrix. If $A$ is invertible, $\det(\Xi) = \det(A)\det(D - CA^{-1} B)$. If $A$ is not invertible, we can still apply this formula replacing $A$ with a sequence of invertible matrices $A_n \to A$, and then taking the limit. We omit this standard density argument, and in the following, we will formally manipulate all the expression as if $A$ were invertible. Thus, using the fact that $f(L_v)^{-1} = (1/f)(L_v)$, we obtain
\begin{equation}
\tilde{J}(u,|v|,\hat{v}) = \frac{ \det(f(L_v))}{2|v|^2}  u^*\underbrace{\left[g'(L_v)L_v - g(L_v) - \frac{2g(L_v)f'(L_v) L_v}{f(L_v)}\right]}_{\Theta(L_v)}u.
\end{equation}
To compute the non trivial matrix $\Theta(L_v)$, we first notice that we can replace it with its symmetric part $(\Theta(L_v) + \Theta(L_v^*))/2$ without changing the result. Moreover, $\Theta(L_v)^* = \Theta(L_v^*) = \Theta(-L_v)$. Exploiting our convention, it is sufficient to perform this computation for the corresponding analytic function of one variable $\Theta(z)$. We obtain
\begin{equation}
\frac{\Theta(z) + \Theta(-z)}{2} = 2\frac{\sinh\left(\frac{z}{2}\right) - \frac{z}{2}\cosh\left(\frac{z}{2}\right)}{\sinh\left(\frac{z}{2}\right)}.
\end{equation}
Thanks to this expression, we obtain
\begin{equation}
\tilde{J}(u,|v|,\hat{v})=\frac{\det(f(L_v))}{|v|^2}u^*\left[\frac{\sinh\left(\frac{L_v}{2}\right) - \frac{L_v}{2}\cosh\left(\frac{L_v}{2}\right)}{\sinh\left(\frac{L_v}{2}\right)}\right]  u.
\end{equation}
We now perform an orthogonal change of basis in the $u$ space, in such a way that $L_v$ is in normal form \eqref{eq:normalformlLv}. Furthermore, we decompose $u=(u_0,u_1,\dots,u_d)$ according to the real eigenspaces corresponding to the kernel of $L_v$ and the pairs of eigenvalues $\pm i |v| \alpha_j$, for $j=1,\dots,d$ of $L_v$. In particular, $u_0 \in \R^{k-2d}$, and $u_i \in \R^{2}$. Then, we get
\begin{align}
\tilde{J}(u,|v|,\hat{v}) & =\frac{f(0)^{k-2d}}{|v|^2} \prod_{j=1}^d f(i\alpha_j|v|)f(-i\alpha_j|v|)
\sum_{i=1}^d |u_i|^2 \frac{\sin\left(\frac{\alpha_i|v| }{2}\right) - \frac{\alpha_i|v|}{2} \cos\left(\frac{\alpha_i|v| }{2}\right)}{\sin\left(\frac{ \alpha_i|v|}{2}\right)} \\
& = \frac{1}{|v|^2}\prod_{j=1}^d \left(\frac{2}{\alpha_j |v|} \sin\left(\frac{\alpha_j |v|}{2}\right)\right)^2
\sum_{i=1}^d |u_i|^2 \frac{\sin\left(\frac{\alpha_i |v|}{2}\right) - \frac{\alpha_i |v|}{2} \cos\left(\frac{\alpha_i |v|}{2}\right)}{\sin\left(\frac{\alpha_i |v|}{2}\right)} \\
& = \frac{2^{2d}}{\alpha^2|v|^{2d+2}}\sum_{i=1}^d |u_i|^2 \prod_{j \neq i}\sin\left(\frac{\alpha_j |v|}{2}\right)^2 \sin\left(\frac{\alpha_i |v|}{2}\right)\times \\
& \qquad\qquad\qquad\qquad\qquad\hfill\qquad\qquad \times \left(\sin\left(\frac{\alpha_j |v| }{2}\right) - \frac{\alpha_j |v|}{2} \cos\left(\frac{\alpha_j|v| }{2}\right)\right),
\end{align}
where in the second line we used the fact that $f(0) = 1$, and $\alpha = \prod_{i=1}^d \alpha_i$. From this expression, we recover the expression for the Jacobian determinant in the original Cartesian coordinates $(u,v)$ through \eqref{eq:polartocart}, and observing that
\begin{equation}
|z| = \frac{g(L_v) u \cdot u}{2|v|} = \sum_{i=1}^d |u_i|^2 \frac{\alpha_i |v| - \sin(\alpha_i |v|)}{2 \alpha_i|v|^2}. \qedhere
\end{equation}
\end{proof}

\begin{lemma}[Cotangent injectivity domain]\label{l:ingjd}
Let $\alpha_{d}$ be the largest eigenvalue of $S$, and 
\begin{equation}
D := \left\lbrace \lambda = (u,v) \in T_e^*G \text{ such that } |v| < \frac{2\pi}{\alpha_d}, \text{ and }  S u \neq 0 \right\rbrace \subset T_e^*G.
\end{equation}
Then $\exp_e : D \to \exp_e(D)$ is a smooth diffeomorphism and $\mathcal{C}(e):=G \setminus \exp_e(D) $ is a closed set with zero measure. Moreover, for all $\lambda \in D$, the curve $\gamma_\lambda(t)=\exp_e(t\lambda)$, with $t \in[0,1]$, is the unique minimizing geodesic joining its endpoints.
\end{lemma}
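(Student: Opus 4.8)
The plan is to base everything on the explicit formulas of Lemmas~\ref{l:geods} and \ref{l:jacobian}, after reducing to the ideal case. Using the product decomposition $G=\R^{k-2d}\times\hat G$ established above (with $\hat G$ ideal and $\hat S>0$), Lemma~\ref{l:geods} shows $\exp^G_e(u_0,\hat u,v)=(u_0,\exp^{\hat G}_e(\hat u,v))$ and $D=\R^{k-2d}\times\hat D$, where the $\R^{k-2d}$ factor only contributes a straight line that is always the unique minimizing geodesic and multiplies null sets by $\R^{k-2d}$; so the four conclusions for $G$ follow from the same conclusions for $\hat G$. Thus I would assume $S>0$, $k=2d$, and $D=\{(u,v):|v|<2\pi/\alpha_d,\ u\neq0\}$.

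First I would show that $\exp_e|_D$ is a local diffeomorphism: for $(u,v)\in D$ with $v\neq0$ every $\alpha_j|v|$ lies in $(0,2\pi)$, and since $\theta-\sin\theta>0$ for $\theta>0$, $\sin(\theta/2)>0$ and $\sin\theta-\theta\cos\theta>0$ for $\theta\in(0,\pi)$ (the last because it vanishes at $0$ with derivative $\theta\sin\theta$), each factor in the formula of Lemma~\ref{l:jacobian} is positive, so $J(u,v)>0$ (using $u\neq0$ for the two sums); also $J(u,0)=(|Su|^2/12)^{n-k}>0$. For injectivity, let $\exp_e(u,v)=\exp_e(u',v')$ with both covectors in $D$. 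By Lemma~\ref{l:geods} the vertical component $z$ is a nonnegative multiple of $v/|v|$ that vanishes iff $v=0$ (as $g(L_v)>0$ for $v\neq0$, $u\neq0$); if $z=0$ then $v=v'=0$ and $\exp_e(u,0)=(u,0)=\exp_e(u',0)$ gives $(u',v')=(u,0)$. Otherwise $v,v'\neq0$, $z\neq0$, so $\hat v=\hat z=\hat v'=:\hat w$, and $L_v,L_{v'}$ are positive multiples of $L_{\hat w}$, hence share its eigenspace decomposition $\bigoplus_j E_j$; writing $P_j$ for the projection onto $E_j$ and $\psi(\theta):=(\theta-\sin\theta)/(8\sin^2(\theta/2))$, Lemma~\ref{l:geods} becomes $|z|=\sum_j\alpha_j\,\psi(\alpha_j|v|)\,|P_jx|^2$, and the same holds for $(u',v')$ with the \emph{same} coefficients $|P_jx|^2=|P_jx'|^2$. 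Since $\psi'>0$ on $(0,2\pi)$ and $x=f(L_v)u\neq0$, the right-hand side is strictly increasing in $|v|\in(0,2\pi/\alpha_d)$, forcing $|v|=|v'|$, then $v=v'$ and $u=f(L_v)^{-1}x=u'$. Hence $\exp_e|_D$ is a diffeomorphism onto the open set $\exp_e(D)$, and $\mathcal C(e)$ is closed.

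Next, to see $\mathcal C(e)$ is null, I would note that Lemma~\ref{l:jacobian} gives $J(t\lambda)>0$ for $0<t<2\pi/(\alpha_d|v|)$ (same sign analysis) and $J(t\lambda)=0$ at $t=2\pi/(\alpha_d|v|)$ (every summand carries a factor $\sin(\alpha_d t|v|/2)=0$), so the first conjugate time of $\gamma_{(u,v)}$, $u\neq0\neq v$, equals $2\pi/(\alpha_d|v|)$; since a normal geodesic is not length-minimizing past its first conjugate point (see \cite{rifford2014sub}), any minimizing $\gamma_{(u,v)}|_{[0,1]}$ with $u\neq0$ has $|v|\le2\pi/\alpha_d$. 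Also $\exp_e$ is onto (completeness yields a minimizing geodesic to any point, and $\hat G$ being ideal it is normal). Consequently, if $q\notin\exp_e(D)$ then a minimizing geodesic to $q$ has covector outside $D$, i.e.\ with $u=0$ (so $q=e$) or $|v|=2\pi/\alpha_d$; thus $G\setminus\exp_e(D)\subseteq\{e\}\cup\exp_e(\{|v|=2\pi/\alpha_d\})$, the $\exp_e$-image of a point and of an $(n-1)$-dimensional set, hence Lebesgue-null. Finally, for $\lambda=(u,v)\in D$, $q=\exp_e(\lambda)\neq e$, any minimizing geodesic to $q$ has normal covector $(u',v')$ with $u'\neq0$ and $|v'|\le2\pi/\alpha_d$; if $|v'|=2\pi/\alpha_d$ then $g(L_{v'})>0$, $u'\neq0$ give $z(q)\neq0$, which excludes $v=0$ and, when $v\neq0$, forces $\hat v=\hat v'$; then on each $E_j$ with $\alpha_j=\alpha_d$ the restriction of $L_{v'}$ is a sum of $2\pi\mathbbold{J}$ blocks with $f(2\pi\mathbbold{J})=0$, so $P_jx=0$ and (as $f(L_v)$ is invertible there) $P_ju=0$, and comparing $|z|$ on both sides — where the top frequencies contribute $(\alpha_d/4\pi)\sum_{\alpha_j=\alpha_d}|P_ju'|^2\ge0$ on one side and nothing on the other, while strict monotonicity of $\psi$ with $|v|<|v'|$ governs the lower frequencies — forces $P_jx=0$ for all $j$, i.e.\ $x=0$, contradicting $x=f(L_v)u\neq0$. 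Hence $(u',v')\in D$, so it equals $\lambda$ by injectivity, proving both that $\gamma_\lambda$ is minimizing and that it is the unique minimizer; smoothness of $\Phi^e(q,t):=\gamma_{(\exp_e|_D)^{-1}(q)}(t)$ then yields negligibility of the cut locus (at $e$, hence everywhere by left-invariance).

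I expect the main obstacle to be the injectivity together with the boundary exclusion $|v|\neq2\pi/\alpha_d$: when the frequencies $\alpha_1<\dots<\alpha_d$ are not all equal, $|z|/|x|^2$ fails to be a function of $|v|$ alone (unlike in the Heisenberg and classical H-type cases), and one must exploit the direction identity $\hat z=\hat v$ to pass to the common eigenspace decomposition of $L_{\hat v}$ and reduce to the one-parameter monotonicity of $\psi$.
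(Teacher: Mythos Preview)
Your proof is correct and, in several respects, more complete than the paper's. The overall architecture is the same (positivity of the Jacobian on $D$, surjectivity of $\exp_e|_{\bar D}$ via the conjugate-time bound, injectivity on $D$), but you take a different route at three points. First, you reduce to the ideal case via the product decomposition $G=\R^{k-2d}\times\hat G$, whereas the paper works directly with a possibly degenerate $S$; your reduction is clean and lets you invoke the standard ``not minimizing past the first conjugate time'' without the paper's extra appeal to real-analyticity and absence of abnormal segments. Second, for injectivity the paper simply asserts that the $z$-formula ``readily implies $\bar v=v$'', which in fact hides a nontrivial step: knowing $\hat v=\hat z=\hat{\bar v}$ does not by itself fix $|v|$. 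Your device of rewriting $|z|=\sum_j\alpha_j\,\psi(\alpha_j|v|)\,|P_jx|^2$ with $\psi(\theta)=(\theta-\sin\theta)/(8\sin^2(\theta/2))$ and using that $\psi'>0$ on $(0,2\pi)$ (equivalently $(1-\cos\theta)^2>(\theta-\sin\theta)\sin\theta$ there, which follows since the difference vanishes at $0,2\pi$ and has derivative $\sin\theta-\theta\cos\theta$) is exactly what is needed to make that step rigorous. Third, the paper's argument only shows injectivity of $\exp_e|_D$ and does not explain why $\gamma_\lambda$ is itself minimizing (i.e.\ why a minimizer cannot have covector on the boundary $|v'|=2\pi/\alpha_d$); your explicit exclusion of that boundary case --- using $f(2\pi\mathbbold{J})=0$ to force $P_jx=0$ on the top eigenspaces and then the monotonicity of $\psi$ on the lower frequencies --- fills this gap as well. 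In short, the paper's proof is shorter but leans on ``readily implies''; yours supplies the missing monotonicity lemma and the boundary analysis.
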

\begin{rmk}
By Lemma~\ref{l:geods}, geodesics $\gamma_\lambda$ with $\lambda =(u,v)$ satisfying $Su=0$ or $v=0$ are Euclidean straight lines $(tu,0)$, which are optimal for all times. This observation and Lemma~\ref{l:ingjd} imply that the cut time, that is the first time of loss of optimality, is
\begin{equation}
t_{\mathrm{cut}}(u,v) = \begin{cases} \frac{2\pi}{\alpha_d|v|} & \text{if } v \neq 0 \text{ and } Su\neq 0, \\
+\infty & \text{otherwise}. 
\end{cases}
\end{equation} 
For standard H-type groups $S=\mathbbold{1}_{2d}$, and Remark~\ref{eq:geodesicsclassical} yields $\mathrm{Cut}_e = \{\exp(t_{\mathrm{cut}}(\lambda)\lambda)\mid \lambda \in T_e^*G\} = \{(0,z)\mid z \in \R^{n-k}\}$ (the center of $G$). This fact was the main result proven in \cite{Mauricio}.
\end{rmk}
\begin{proof}
Since all geodesic are normal and $(G,d)$ is complete, each point of $G$ is reached by at least one minimizing normal geodesic $\gamma_\lambda :[0,1] \to G$, with $\lambda =(u,v) \in T_e^*G$. If $|v| > 2 \pi/\alpha_d$, and $Su \neq 0$, then $\gamma_\lambda$ is not abnormal with a conjugate time at $t_* = 2 \pi |v|/\alpha_d< 1$. Since these structures are real-analytic, $\gamma_\lambda$ does not contain any abnormal segment, hence cannot be minimizing after its first conjugate time (see \cite[Chapter 8]{nostrolibro} or \cite[Appendix A]{BR-grande1}). Thus $\gamma_\lambda$ is not minimizing on $[0,1]$. On the other hand, if $Su = 0$, for any value of $v$ we obtain the same abnormal geodesic (see Lemma~\ref{l:abnormals}). It follows that $\exp_e : \bar{D} \to G$ is onto (where $\bar D$ denotes the closure of $D$). Since also $\exp_e: D \to \exp_e(D)$ is trivially onto, $\mathcal{C}(e) = G \setminus \exp_e(D) \subseteq \exp_e(\partial D)$ has zero measure.

We now prove that, if $\lambda \in D$, then $\gamma_\lambda :[0,1] \to G$ is the unique minimizing geodesic joining its endpoints. Assume by contradiction that $\lambda=(u,v)$ and $\bar\lambda=(\bar u,\bar v) \in D$, with $\lambda \neq \bar{\lambda}$, are such that $\exp_e(\lambda) = \exp_e(\bar \lambda)$. Observe that the spectrum of $L_v$ is given by
\begin{equation}\label{eq:specLv}
\sigma(L_v) = \{0,\pm i |v| \alpha_1,\dots,\pm i |v|\alpha_d\}.
\end{equation}
Using the $z$ part of Lemma~\ref{l:geods}, and \eqref{eq:specLv}, we have $g(L_v)u \cdot u / |v|^2 \geq 0$, with equality if and only if $v=0$. This readily implies that $\bar{v} = v$.
From the $x$ equation of Lemma~\ref{l:geods}, we have that $\exp_e(u,v) = \exp_e(\bar{u},v)$ is equivalent to
\begin{equation}\label{eq:kernelf}
f(L_v) (u-\bar{u}) = 0.
\end{equation}
If $v=0$, and since $f(0) = 1$, \eqref{eq:kernelf} implies $u = \bar{u}$, which is a contradiction. Hence we proceed assuming $v \neq 0$. Since $|v| < 2\pi/|\alpha_d|$, we have that $\alpha_j |v| < 2\pi$  for all $j=1,\dots,d$. It is then easy to check that $f(\pm i |v| \alpha_j) \neq 0$. This observation, \eqref{eq:kernelf} and \eqref{eq:specLv} imply that $u - \bar{u} \in \ker L_v$. Using again that $f(0) = 1$, \eqref{eq:kernelf} implies $u = \bar{u}$, which is a contradiction. Thus $\exp_e :D \to \exp_e(D)$ is a bijection.

To conclude, no $\lambda \in D$ can be a critical point for $\exp_e$, since the Jacobian determinant of Lemma~\ref{l:jacobian} is strictly positive for $|v|< 2\pi/\alpha_d$ and $Su \neq 0$.
\end{proof}

The next corollary formalizes the fact that Definition~\ref{d:MCP} for the measure contraction property makes sense in the setting of generalized H-type groups.

\begin{cor}\label{c:good}
For any $x \in G$, consider the zero measure set $\mathcal{C}(x):= \tau_x (\mathcal{C}(e))$, where $\tau_x :G \to G$ is the left-translation. There exists a measurable map $\Phi^x : G \setminus \mathcal{C}(x) \times [0,1] \to G$,
\begin{equation}
\Phi^x(y,t) := \tau_x \exp_{e}(t \exp_e^{-1}(\tau_x^{-1} y)),
\end{equation}
such that $\Phi^x(y,t)$ is the unique minimizing geodesic joining $x$ with $y$.
\end{cor}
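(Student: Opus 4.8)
The plan is to verify the three defining properties of a "negligible cut locus" structure — that $\mathcal{C}(x)$ has zero measure, that $\Phi^x$ is well defined and measurable on the stated domain, and that $\Phi^x(y,\cdot)$ is the unique minimizing geodesic from $x$ to $y$ — by transporting the corresponding statements at the identity $e$ (Lemma~\ref{l:ingjd}) through the left translation $\tau_x$. The key point is that $\tau_x : G \to G$ is a smooth sub-Riemannian isometry, so it maps minimizing geodesics to minimizing geodesics bijectively, preserves uniqueness, and (being a diffeomorphism) preserves sets of zero Lebesgue measure; moreover $\tau_x$ intertwines the exponential maps based at $e$ and at $x$, in the sense that $\exp_x = \tau_x \circ \exp_e \circ (d\tau_x)^*$ after the obvious identification of cotangent spaces, and we work directly with $\exp_e$ as in the formula for $\Phi^x$.

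First I would note that, by Lemma~\ref{l:ingjd}, the set $\mathcal{C}(e) = G \setminus \exp_e(D)$ is closed with zero measure, and $\exp_e : D \to \exp_e(D) = G \setminus \mathcal{C}(e)$ is a smooth diffeomorphism, so $\exp_e^{-1}$ is well defined and smooth on $G \setminus \mathcal{C}(e)$. Since $\tau_x$ is a diffeomorphism of $G = \R^n$, it preserves Lebesgue-null sets, hence $\mathcal{C}(x) = \tau_x(\mathcal{C}(e))$ is a closed set of zero measure, and $G \setminus \mathcal{C}(x) = \tau_x(G \setminus \mathcal{C}(e))$. Therefore, for $y \in G \setminus \mathcal{C}(x)$ we have $\tau_x^{-1} y \in G \setminus \mathcal{C}(e)$, so $\lambda := \exp_e^{-1}(\tau_x^{-1} y) \in D$ is well defined, and the map $y \mapsto \lambda$ is smooth on $G \setminus \mathcal{C}(x)$; composing with the smooth map $(\lambda,t) \mapsto \tau_x \exp_e(t\lambda)$ shows that $\Phi^x$ is smooth — in particular measurable — on $(G \setminus \mathcal{C}(x)) \times [0,1]$.

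Next, fix $y \in G \setminus \mathcal{C}(x)$ and let $\lambda = \exp_e^{-1}(\tau_x^{-1}y) \in D$. By the last assertion of Lemma~\ref{l:ingjd}, the curve $t \mapsto \exp_e(t\lambda)$ is the unique minimizing geodesic from $e$ to $\tau_x^{-1} y$. Applying the isometry $\tau_x$, the curve $t \mapsto \Phi^x(y,t) = \tau_x \exp_e(t\lambda)$ is a minimizing geodesic from $x$ to $y$; and if $\sigma$ were any minimizing geodesic from $x$ to $y$, then $\tau_x^{-1} \circ \sigma$ would be a minimizing geodesic from $e$ to $\tau_x^{-1} y$, hence equal to $t \mapsto \exp_e(t\lambda)$ by the uniqueness at $e$, so $\sigma = \Phi^x(y,\cdot)$. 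This gives uniqueness, and completes the verification. I do not expect a genuine obstacle here: every step is a formal transport along the isometry $\tau_x$ once Lemma~\ref{l:ingjd} is in hand, the only mild care being the standard fact that diffeomorphisms of $\R^n$ preserve null sets and that the composition of smooth maps appearing in the definition of $\Phi^x$ is smooth on the indicated open set.
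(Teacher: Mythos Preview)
Your proposal is correct and follows exactly the approach the paper intends: the corollary is stated in the paper without an explicit proof, as it is an immediate consequence of Lemma~\ref{l:ingjd} together with the fact that left translations are smooth sub-Riemannian isometries. Your argument fills in precisely these routine details (null sets are preserved by diffeomorphisms, smoothness of the composition defining $\Phi^x$, and transport of uniqueness of minimizers via the isometry $\tau_x$), and there is nothing to add or correct.
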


\section{Proof of the main result}

To prove Theorem~\ref{t:htype}, we begin with a simplified version of \cite[Lemma 2.6]{Juillet}. The argument will be useful to prove further inequalities appearing in generalized H-type groups.

\begin{lemma}\label{l:ineq}
Let $g(x):= \sin(x) - x \cos(x)$. Then, for all $x \in (0,\pi)$ and $t \in [0,1]$,
\begin{equation}
g(t x) \geq t^N g(x), \qquad \forall N \geq 3.
	\end{equation}
\end{lemma}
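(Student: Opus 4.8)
The plan is to reduce the inequality $g(tx)\geq t^N g(x)$ for $x\in(0,\pi)$, $t\in[0,1]$, $N\geq 3$, to the case $N=3$, and then prove that case by a monotonicity argument. First I would observe that since $g(x)=\sin x - x\cos x > 0$ on $(0,\pi)$ (because $g(0)=0$ and $g'(x)=x\sin x > 0$ there), and since $0\leq t\leq 1$ forces $t^N\leq t^3$, it suffices to establish $g(tx)\geq t^3 g(x)$; the general $N\geq 3$ case then follows immediately from $t^N g(x)\leq t^3 g(x)\leq g(tx)$.

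For the case $N=3$, the natural move is to fix $x\in(0,\pi)$ and define $h(t):=g(tx)-t^3 g(x)$ on $[0,1]$, noting $h(0)=h(1)=0$, and show $h(t)\geq 0$ on $[0,1]$. Equivalently — and more robustly — I would consider the function $\varphi(x):=g(x)/x^3$ and show it is monotone non-increasing on $(0,\pi)$: if $\varphi$ is non-increasing, then for $t\in(0,1]$ we have $tx\leq x$, hence $\varphi(tx)\geq\varphi(x)$, i.e. $g(tx)/(tx)^3\geq g(x)/x^3$, which rearranges exactly to $g(tx)\geq t^3 g(x)$ (the case $t=0$ being trivial since $g(0)=0$). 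So the whole lemma comes down to: $x\mapsto (\sin x - x\cos x)/x^3$ is non-increasing on $(0,\pi)$.

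To verify that monotonicity I would compute the derivative of $\varphi$. Writing $\varphi(x) = x^{-3}g(x)$, we get $\varphi'(x) = x^{-4}\bigl(x g'(x) - 3 g(x)\bigr) = x^{-4}\bigl(x^2\sin x - 3\sin x + 3x\cos x\bigr)$, using $g'(x)=x\sin x$. So it suffices to show $\psi(x):=x^2\sin x - 3\sin x + 3x\cos x \leq 0$ on $(0,\pi)$. Here $\psi(0)=0$, and differentiating gives $\psi'(x) = 2x\sin x + x^2\cos x - 3\cos x + 3\cos x - 3x\sin x = x^2\cos x - x\sin x = x(x\cos x - \sin x) = -x\,g(x)$, which is strictly negative on $(0,\pi)$ since $x>0$ and $g(x)>0$ there. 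Hence $\psi$ is strictly decreasing on $(0,\pi)$ with $\psi(0)=0$, so $\psi<0$ on $(0,\pi)$, giving $\varphi'<0$ and the desired monotonicity.

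I expect the main (and really only) obstacle to be bookkeeping: keeping the sign of $g$ straight on $(0,\pi)$ and making sure the chain of reductions $N\geq 3 \Rightarrow N=3 \Rightarrow$ monotonicity of $g(x)/x^3 \Rightarrow$ sign of $\psi = -\int_0^x s\,g(s)\,ds$ is airtight, including the behavior at the endpoints $t=0$, $x\to 0^+$ (where $g(x)/x^3\to 1/3$, so $\varphi$ extends continuously) and $x\to\pi^-$. No delicate estimates are needed — everything follows from the elementary fact that $g'(x)=x\sin x>0$ on $(0,\pi)$, applied twice. An alternative, equally short route would be to expand $g$ in its Taylor series $g(x)=\sum_{\ell\geq 1}\frac{(-1)^{\ell+1}2\ell}{(2\ell+1)!}x^{2\ell+1}$ and compare coefficients after dividing by $x^3$, but the two-step derivative argument above is cleaner and avoids convergence/sign-alternation subtleties on all of $(0,\pi)$.
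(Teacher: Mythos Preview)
Your proof is correct and is essentially the same argument as the paper's: the paper reduces to $N=3$ and then proves the differential inequality $g'(s)\leq 3g(s)/s$ on $(0,\pi)$ (invoking Gronwall to integrate it), which is exactly your condition $\varphi'(x)=x^{-4}(xg'(x)-3g(x))\leq 0$; your auxiliary function $\psi$ is precisely $-f$ in the paper's notation, and both of you close the argument by observing that its derivative equals $\pm x\,g(x)$.
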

\begin{proof}
The condition $N \geq 3$ is necessary, as $g(x) = x^3/3 + O(x^4)$. It is sufficient to prove the statement for $N=3$. The cases $t=0$ and $t=1$ are trivial, hence we assume $t \in (0,1)$. By Gronwall's Lemma the above statement is implied by the differential inequality
\begin{equation}
g'(s) \leq  3 g(s)/s, \qquad s \in (0,\pi).
\end{equation}
In fact, it is sufficient to integrate the above inequality on $[tx,x] \subset (0,\pi)$ to prove our claim. The above inequality reads
\begin{equation}
f(s):=(3-s^2) \sin (s)-3 s \cos (s) \geq 0, \qquad s \in (0, \pi).
\end{equation}
To prove it, we observe that $f(0) = 0$ and $f'(s) =  s (\sin (s)-s \cos (s)) \geq 0$ on $[0,\pi]$.
\end{proof}
The next inequality is required to deal with the extra terms appearing in the case of generalized H-type groups, due to the fact that they have corank larger than one.
\begin{lemma}\label{l:ineq2}
Let $f(x):= x - \sin(x)$. Then, for all $x \in (0,2\pi)$ and $t \in [0,1]$,
\begin{equation}
f(t x) \geq t^N f(x), \qquad \forall N \geq 3.
\end{equation}
\end{lemma}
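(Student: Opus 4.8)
The plan is to mimic the proof of Lemma~\ref{l:ineq}, using Gronwall's Lemma to reduce the statement to a differential inequality, but with the key observation that the relevant interval for $f(x) = x - \sin x$ is now $(0,2\pi)$ rather than $(0,\pi)$. As before, it suffices to prove the case $N = 3$, since $f(x) = x^3/6 + O(x^5)$ near $0$ forces $N \geq 3$, and $t^N f(x)$ is decreasing in $N$ for $t \in (0,1)$ and $f(x) > 0$ on the interval. The cases $t = 0$ and $t = 1$ are trivial, so assume $t \in (0,1)$.

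For the reduction: the inequality $f(tx) \geq t^3 f(x)$ for all $t \in (0,1)$ is equivalent to the function $s \mapsto f(s)/s^3$ being non-increasing on $(0,2\pi)$, which in turn follows from the differential inequality $f'(s) \leq 3 f(s)/s$ on $(0,2\pi)$: integrating this on $[tx, x] \subset (0,2\pi)$ (equivalently, applying Gronwall) gives the claim. So the task reduces to showing
\begin{equation}
3 f(s) - s f'(s) = 3(s - \sin s) - s(1 - \cos s) = 2s - 3\sin s + s\cos s \geq 0, \qquad s \in (0,2\pi).
\end{equation}
Call this function $h(s) := 2s - 3\sin s + s \cos s$. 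One checks $h(0) = 0$, and then $h'(s) = 2 - 3\cos s + \cos s - s \sin s = 2 - 2\cos s - s\sin s$. Again $h'(0) = 0$, and $h''(s) = 2\sin s - \sin s - s\cos s = \sin s - s\cos s = g(s)$ in the notation of Lemma~\ref{l:ineq}.

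Now the point is that $g(s) = \sin s - s\cos s \geq 0$ holds not just on $(0,\pi)$ but on all of $(0,2\pi)$: indeed $g(0)=0$ and $g'(s) = s\sin s$, which is $\geq 0$ on $(0,\pi)$ and $\leq 0$ on $(\pi,2\pi)$, so $g$ increases then decreases on $(0,2\pi)$, with $g(\pi) = \pi > 0$ and $g(2\pi) = -2\pi < 0$; hence $g$ has a single zero $s_0 \in (\pi, 2\pi)$ and $g \geq 0$ on $(0, s_0]$ but $g < 0$ on $(s_0, 2\pi)$. So $h'' = g$ is not sign-definite on $(0,2\pi)$, and the naive iterated-derivative argument stalls. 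The main obstacle is therefore to control $h$ on the subinterval $(s_0, 2\pi)$ where $h'' < 0$. The plan to handle this: since $h''= g \geq 0$ on $(0,s_0)$ and $h'(0)=0$, we get $h' \geq 0$ on $[0,s_0]$, hence $h$ is increasing there and in particular $h(s_0) > h(0) = 0$; on $(s_0, 2\pi)$, $h'$ is decreasing, so either $h$ stays increasing (if $h'$ remains positive) — fine — or $h'$ becomes negative, and then we only need to verify that $h$ has not dropped back to $0$ by $s = 2\pi$. Since $h(2\pi) = 4\pi - 0 + 2\pi = 6\pi > 0$ and (more carefully) $h$ is concave on $(s_0,2\pi)$ with $h(s_0)>0$ and $h(2\pi)>0$, concavity gives $h > \min\{h(s_0), h(2\pi)\} \cdot(\text{convex combination}) > 0$ on the whole interval $[s_0, 2\pi]$. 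Combining the two regimes yields $h \geq 0$ on $(0,2\pi)$, completing the proof. An alternative, perhaps cleaner, route is to write $h'(s) = 2(1-\cos s) - s\sin s$ and on $(\pi, 2\pi)$ use $1 - \cos s \geq 0$ and $-s \sin s \geq 0$ directly to get $h' \geq 0$ there outright, so in fact $h' \geq 0$ on all of $(0,2\pi)$ and $h$ is simply non-decreasing — one should check which of these is actually correct on $(\pi,2\pi)$, but the direct sign analysis of $h'(s) = 2 - 2\cos s - s\sin s$ term by term on $(\pi, 2\pi)$ is the expected shortcut.
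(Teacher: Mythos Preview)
Your reduction via Gronwall to the inequality $h(s) = 2s - 3\sin s + s\cos s \geq 0$ on $(0,2\pi)$ is exactly the paper's approach. For the verification of $h \geq 0$, however, the paper simply asserts that $h'(0) = h''(0) = 0$ and $h'''(s) = s\sin s \geq 0$ on $[0,2\pi]$; this last inequality is in fact false on $(\pi, 2\pi)$, so the paper's argument as written has a gap at precisely the point you flagged when you noted that the iterated-derivative scheme stalls.

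Both of your proposed fixes are correct. The second one is the cleaner and is the natural patch: on $(\pi, 2\pi)$ one has $1 - \cos s \geq 0$ and $-s\sin s \geq 0$, so $h'(s) = 2(1-\cos s) - s\sin s \geq 0$ there; on $(0,\pi]$ one has $h'' = g \geq 0$ (since the zero $s_0$ of $g$ lies beyond $\pi$) and $h'(0)=0$, hence $h' \geq 0$ there as well; thus $h' \geq 0$ on all of $(0,2\pi)$ and $h(s) \geq h(0)=0$. You should commit to this route rather than leaving it as a tentative alternative --- the hedging (``one should check which of these is actually correct'') is unnecessary, as the term-by-term sign analysis on $(\pi,2\pi)$ is immediate.
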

\begin{proof}
Similarly as in the proof of Lemma~\ref{l:ineq}, and by Gronwall's Lemma applied to the interval $[tx,x] \subset (0,2\pi)$, it is sufficient to prove the differential inequality
\begin{equation}
f'(s) \leq  3 f(s)/s, \qquad s \in (0,2\pi).
\end{equation}
The above inequality reads
\begin{equation}
h(s):=2s - 3 \sin(s) + s \cos(s) \geq 0, \qquad s \in (0,2\pi).
\end{equation}
It is sufficient to observe that $h'(0)=h''(0)=0$ and $h'''(s) = s \sin(s) \geq 0$ on $[0,2\pi]$.
\end{proof}
\begin{cor}\label{c:jacobineq}
For all $(u,v) \in D$, we have the following inequality
\begin{equation}
\frac{J(t u, t v)}{J(u,v)} \geq t^{2(n-k)}, \qquad \forall t \in [0,1].
\end{equation}
\end{cor}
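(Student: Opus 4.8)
The plan is to substitute $(tu,tv)$ into the explicit formula for $J$ obtained in Lemma~\ref{l:jacobian}, extract the powers of $t$ coming from pure homogeneity, and control the remaining trigonometric factors using the scalar inequalities of Lemmas~\ref{l:ineq} and~\ref{l:ineq2} together with the concavity of $\sin$ on $[0,\pi]$. The cases $t\in\{0,1\}$ are immediate, so fix $t\in(0,1)$ and $(u,v)\in D$.

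First I would dispose of the degenerate case $v=0$: by Lemma~\ref{l:jacobian}, $J(u,0)=(|Su|^2/12)^{n-k}$, and since $S(tu)=tSu$ this expression is exactly $2(n-k)$-homogeneous, so $J(tu,0)=t^{2(n-k)}J(u,0)$ (in particular the exponent $2(n-k)$ is sharp). Henceforth assume $v\neq 0$, and set $y_i:=\alpha_i|v|/2$. Because $(u,v)\in D$ we have $|v|<2\pi/\alpha_d$, hence $\alpha_i|v|\in(0,2\pi)$ and $y_i\in(0,\pi)$ for every $i=1,\dots,d$; this is precisely the range in which Lemmas~\ref{l:ineq}, \ref{l:ineq2} and the concavity of $\sin$ on $[0,\pi]$ are applicable, and it is the reason the statement is confined to $D$.

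Next I would write $J(u,v)$, following Lemma~\ref{l:jacobian}, as a product of three non-negative pieces: a prefactor $\tfrac{2^{2d}}{\alpha^2|v|^{2d+2}}$; the $(n-k-1)$-th power of $P:=\sum_{i=1}^d|u_i|^2\,\tfrac{\alpha_i|v|-\sin(\alpha_i|v|)}{2\alpha_i|v|^3}$; and $Q:=\sum_{i=1}^d|u_i|^2\big(\prod_{j\neq i}\sin^2 y_j\big)\sin y_i\big(\sin y_i-y_i\cos y_i\big)$. Then track the effect of $(u,v)\mapsto(tu,tv)$ — i.e.\ $|u_i|^2\mapsto t^2|u_i|^2$, $|v|\mapsto t|v|$, $y_i\mapsto ty_i$ — piece by piece. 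The prefactor picks up $t^{-(2d+2)}$. In $P$ the $i$-th summand becomes $t^2|u_i|^2\tfrac{f(t\alpha_i|v|)}{2\alpha_i t^3|v|^3}$ with $f(x)=x-\sin x$, and Lemma~\ref{l:ineq2} gives $f(t\alpha_i|v|)\ge t^3 f(\alpha_i|v|)$, so the scaled $P$ is $\ge t^2 P\ge 0$ and its $(n-k-1)$-th power is $\ge t^{2(n-k-1)}P^{n-k-1}$. In $Q$ the $i$-th summand collects: $t^2$ from $|u_i|^2$; $t^{2(d-1)}$ from the $d-1$ factors $\sin^2(ty_j)\ge t^2\sin^2 y_j$ (concavity of $\sin$); $t$ from $\sin(ty_i)\ge t\sin y_i$; and $t^3$ from $\sin(ty_i)-ty_i\cos(ty_i)\ge t^3(\sin y_i-y_i\cos y_i)$ (Lemma~\ref{l:ineq}); hence the scaled $Q$ is $\ge t^{\,2+2(d-1)+1+3}Q=t^{2d+4}Q$. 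Multiplying the three estimates, $J(tu,tv)\ge t^{-(2d+2)+2(n-k-1)+(2d+4)}J(u,v)=t^{2(n-k)}J(u,v)$, and since $J(u,v)>0$ on $D$ by Lemma~\ref{l:ingjd}, dividing gives the claim.

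The only point requiring care is the bookkeeping: the negative power $t^{-(2d+2)}$ coming from the $|v|^{-(2d+2)}$ prefactor must be cancelled exactly by the positive powers harvested from $P^{n-k-1}$ and $Q$, so that the exponent lands precisely on $2(n-k)$; this also dictates applying the two auxiliary lemmas with their sharp exponent $N=3$ (a larger $N$ would only weaken the bound, since $t\le1$). Beyond this accounting there is no real obstacle: once $(u,v)\in D$ guarantees that every trigonometric argument stays inside the admissible interval, the inequality reduces to the elementary scalar facts already established, and the resulting equality in the case $v=0$ shows the exponent cannot be improved.
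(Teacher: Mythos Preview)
Your proof is correct and follows exactly the approach indicated in the paper's one-line proof: apply Lemmas~\ref{l:ineq} and~\ref{l:ineq2} and the elementary inequality $\sin(tx)\ge t\sin x$ on $[0,\pi]$ to the explicit formula of Lemma~\ref{l:jacobian}. Your version simply makes the bookkeeping explicit---tracking the powers of $t$ contributed by the prefactor, by $P^{n-k-1}$, and by $Q$, and checking that they add up to $2(n-k)$---which is exactly what the paper leaves to the reader.
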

\begin{proof}
Apply Lemmas \ref{l:ineq} and \ref{l:ineq2} to the explicit expression of $J$ from Lemma~\ref{l:jacobian}, and then use the standard inequality $\sin(t x) \geq t \sin(x)$, valid for all $x \in [0,\pi]$ and $t \in [0,1]$.
\end{proof}

\subsection{Proof of Theorem~\ref{t:htype}}
We are now ready to prove Theorem~\ref{t:htype}.
Observe that the integer $k+3(n-k)$ coincides with the geodesic dimension of the Carnot group, by Lemma~\ref{l:fact}.

{\bf Step 1.} We first prove that the $\mathrm{MCP}(0,N)$ holds for $N\geq k+3(n-k)$. By left-translation, it is sufficient to prove the inequality \eqref{eq:MCP} for the homothety with center equal to the identity $e=(0,0)$. Let $\Omega$ be a measurable set with $0<\mu(\Omega)<+\infty$. 

By Lemma~\ref{l:ingjd}, up to removing a set of zero measure, $\Omega = \exp_e(A)$ for some $A \subset D \subset T_e^*G$. On the other hand, by Corollary~\ref{c:good}, we have
\begin{equation}
\Omega_t = \exp_e(t A), \qquad \forall t \in [0,1],
\end{equation}
where $t A $ denotes the set obtained by multiplying by $t$ any point of the set $A \subset T_e^*G$ (an Euclidean homothety). Thus, for all $t \in [0,1]$ we have
\begin{align}
\mu(\Omega_t) & = \int_{\Omega_t} d \mu = \int_{t A} J(u,v) du dv \\
& = t^{n}  \int_{A} J(t u, tv) du dv \geq  t^{k+3(n-k)}  \int_{A} J(u,v) d u d v = t^{k+3(n-k)} \mu(\Omega),
\end{align}
where we used Corollary~\ref{c:jacobineq}. In particular $\mu(\Omega_t) \geq t^N \mu(\Omega)$ for all $N \geq k+3(n-k)$.

{\bf Step 2.}
Fix $\varepsilon>0$. Let $N=k+3(n-k)$. We prove that the $\mathrm{MCP}(0,N-\varepsilon)$ does not hold. Let $\lambda = (u,0) \in D$. By Lemma~\ref{l:jacobian}, and recalling that $J > 0$ on $D$, we have 
\begin{equation}
J(t u,0)= t^{2(n-k)} J(u,0) < t^{2(n-k)-\varepsilon} J(u,0), \qquad \forall t \in[0,1].
\end{equation}
By continuity of $J$ and compactness of $[0,1]$, we find an open neighborhood $A \subset D$ such that $J(t\lambda) < t^{2(n-k)-\varepsilon} J(\lambda)$, for all $t \in [0,1]$ and $\lambda \in A$. Arguing as in the first step, for $\Omega= \exp_e(A)$, we obtain
\begin{equation}
\mu(\Omega_t) < t^{n+2(n-k)-\varepsilon} \mu(\Omega) = t^{N-\varepsilon}\mu(\Omega), \qquad t \in [0,1].
\end{equation}

{\bf Step 3.}
To  prove that $\mathrm{MCP}(K,N)$ does not hold for $K>0$ and any $N >1$, we observe that spaces verifying this condition are bounded (see \cite[Theorem 4.3]{Ohta-MCP}), while $G \simeq \R^n$ is not.
Finally, assume that $(G,d,\mu)$ satisfies $\mathrm{MCP}(K,N)$ for some $K < 0$ and $N < k+3(n-k)$. Then the scaled space $(G,\varepsilon^{-1} d, \varepsilon^{-Q} \mu)$ (where $\varepsilon>0$ and $Q = k+2(n-k)$ is the Hausdorff dimension of $(G,d)$) verifies $\mathrm{MCP}(\varepsilon^2 K,N)$ for \cite[Lemma 2.4]{Ohta-MCP}. But the two spaces $(G,d,\mu)$ and $(G,\varepsilon^{-1} d, \varepsilon^{-Q} \mu)$ are isometric through the dilation $\delta_\varepsilon(x,z):=(\varepsilon x, \varepsilon^2 z)$. In particular $(G,d,\mu)$ satisfies the $\mathrm{MCP}(\varepsilon K,N)$ for all $\varepsilon>0$, that is
\begin{equation}
\mu(\Omega_t) \geq \int_{\Omega} t \left[\frac{s_{\varepsilon K}(t d(x,z) / \sqrt{N-1})}{s_{\varepsilon K}(d(x,z) / \sqrt{N-1})}\right]^{N-1} d \mu(z), \qquad \forall t \in[0,1].
\end{equation}
Taking the limit for $\varepsilon \to 0^+$, we obtain that $(G,d,\mu)$ satisfies the $\mathrm{MCP}(0,N)$ with $N< k+3(n-k)$, but this is false as a result of the previous step.
\hfill $\qed$

\SkipTocEntry\subsection*{Acknowledgements}
\thanks{This research has been supported by the Grant ANR-15-CE40-0018 of the ANR. This research benefited from the support of the ``FMJH Program Gaspard Monge in optimization and operation research'' and from the support to this program from EDF. This work has been partially supported by the ANR project ANR-15-IDEX-02.}

\bibliographystyle{abbrv}
\bibliography{MCP-Htype}

\end{document}